\newtheorem{thm}{Theorem}[section]
\newtheorem{lem}[thm]{Lemma}
\newtheorem{conj}[thm]{Conjecture}
\theoremstyle{definition}
\newtheorem{dfn}[thm]{Definition}
\newtheorem{rmk}[thm]{Remark}
\numberwithin{equation}{section}
\newcommand{\var}{\overline}
\def\C{{\mathbb C}}
\def\Q{{\mathbb Q}}
\def\R{{\mathbb R}}
\def\Z{{\mathbb Z}}
\def\h{{\mathbf h}}
\def\D{{\mathbf D}}
\def\var{\overline}
\DeclareMathOperator{\NS}{NS}
\DeclareMathOperator{\Div}{Div}
\DeclareMathOperator{\Pic}{Pic}
\numberwithin{equation}{section}
\newenvironment{parts}[0]{%
  \begin{list}{}%
    {\setlength{\itemindent}{0pt}
     \setlength{\labelwidth}{1.5\parindent}
     \setlength{\labelsep}{.5\parindent}
     \setlength{\leftmargin}{2\parindent}
     \setlength{\itemsep}{0pt}
     }%
   }%
  {\end{list}}
\newcommand{\Part}[1]{\item[\upshape#1]}
\title[Growth rate and DML]
{Growth rate of ample heights and the Dynamical Mordell-Lang conjecture}
\author{Kaoru Sano}
\address{Department of Mathematics, Faculty of Science, Kyoto University, Kyoto 606-8502, Japan}
\email{ksano@math.kyoto-u.ac.jp}
\subjclass[2010]{37P55 (primary), 11G50 (secondary)}
\keywords{Arithmetic of dynamics, the dynamical Mordel-Lang conjecture, the Weil height, the arithmetic degree}
\begin{document}
\maketitle
\begin{abstract}
We provide an explicit formula on the growth rate of ample heights of rational points under iteration of endomorphisms of smooth projective varieties over number fields.
As an application, we give a positive answer to a variant of the Dynamical Mordell-Lang conjecture
for pairs of \'etale endomorphisms, which is also a variant of the original one stated by Bell, Ghioca, and Tucker in their monograph.
\end{abstract}

\tableofcontents

\section{Introduction}\label{intro}
Let $X$ be a smooth projective variety over $\var{\Q}$
and $f\colon X\longrightarrow X$ a surjective endomorphism of $X$ over $\var{\Q}$.
Here, an endomorphism simply means a self-morphism.
Fix an ample divisor $H$ on $X$ over $\var{\Q}$
and take a Weil height function $h_H\colon X(\var{\Q})\longrightarrow \R$
associated with $H$.
For a point $P\in X(\var{\Q})$,
the {\it arithmetic degree} $\alpha_f(P)$ of $f$ at $P$ is defined by
\[
\alpha_f(P) :=\lim_{n\to\infty} \max\{1, h_H(f^n(P))\}^{1/n}.
\]
It is known that the arithmetic degree $\alpha_f(P)$ is well-defined, and independent of the choice of $H$ and $h_H$; see Remark \ref{rmk: arith}.
By using the arithmetic degree, we can describe the growth rate of the ample heights
of rational points under the iteration of the endomorphism as follows.
\begin{thm}\label{Theorem: order of height growth}
Let $X$ be a smooth projective variety over $\var{\Q}$
and $f \colon X \longrightarrow X$ a surjective endomorphism of $X$
over $\var{\Q}$.
Let $H$ be an ample divisor on $X$ over $\var{\Q}$.
Then for any point $P\in X(\var{\Q})$ with $\alpha_f(P)>1$,
there is a non-negative integer $t_f(P)\in \Z_{\geq 0}$,
positive real numbers $C_0, C_1>0$, and an integer $N_0$
such that
\[
C_0 n^{t_f(P)}\alpha_f(P)^n < h_H(f^n(P)) < C_1 n^{t_f(P)}\alpha_f(P)^n
\]
for all $n\geq N_0$.
\end{thm}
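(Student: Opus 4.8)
The plan is to convert the growth of $h_H(f^n(P))$ into the spectral behaviour of the pullback action $A:=f^*$ on the finite-dimensional real vector space $\NS(X)_\R$, and to read off $\alpha_f(P)$ and $t_f(P)$ from a real Jordan decomposition of $A$ together with canonical heights attached to its eigenvalues. The one subtlety in the bookkeeping is that iterating the functoriality relation $h_D\circ f=h_{f^*D}+O(1)$ naively lets the error terms accumulate, and this is avoided by working in a basis of $\NS(X)_\R$ adapted to the real Jordan form of $A$: for a Jordan chain $D_0,D_1,\dots,D_{m-1}$ attached to a real eigenvalue $\lambda$ (so $f^*D_0\equiv\lambda D_0$ and $f^*D_j\equiv\lambda D_j+D_{j-1}$ numerically) one obtains the \emph{uniform} recursions $h_{D_0}(f(Q))=\lambda h_{D_0}(Q)+O(1)$ and $h_{D_j}(f(Q))=\lambda h_{D_j}(Q)+h_{D_{j-1}}(Q)+O(1)$, and analogously for the $2\times2$ real blocks of complex eigenvalues; fixing Weil heights for the basis divisors then gives $h_H=\sum_i c_i h_{D_i}+O(1)$, where $[H]=\sum_i c_iD_i$.

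Since $\alpha_f(P)>1$, only eigenvalues of modulus $>1$ matter, and for each such $\lambda$ the recursions can be solved explicitly, producing convergent limits $\hat h_{D_0}(P)=\lim_n\lambda^{-n}h_{D_0}(f^n(P))$ and their Jordan-corrected analogues at the higher levels of the chain (and the real and imaginary components when $\lambda$ is complex), the convergence being forced by $|\lambda|>1$. Substituting back gives an asymptotic expansion
\[
h_H(f^n(P))=\sum_{|\lambda|>1}Q_\lambda(n)\,|\lambda|^n+O(1),
\]
where $Q_\lambda$ is a polynomial in $n$ times a trigonometric polynomial in $n\arg\lambda$, of polynomial degree controlled by the length of the Jordan chain of $\lambda$ and by which of the associated canonical heights are nonzero at $P$. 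Using that a nonzero trigonometric polynomial stays $\ge c>0$ on a set of integers of positive density, together with $h_H\ge-O(1)$, one checks that the largest modulus occurring with $Q_\lambda\ne0$ is exactly $\alpha:=\alpha_f(P)$; setting $t:=t_f(P)$ equal to the largest polynomial degree among the terms with $|\lambda|=\alpha$ then yields the upper bound $h_H(f^n(P))<C_1n^{t}\alpha^n$ for $n\gg 0$.

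The matching lower bound is the \emph{main obstacle}: one must exclude destructive interference among the modulus-$\alpha$ terms, i.e.\ prove $\liminf_n n^{-t}\alpha^{-n}h_H(f^n(P))>0$, and this genuinely cannot follow from the formal properties listed so far --- a function of the shape $n^{t}\alpha^n(1+\cos n\theta)+O(1)$ satisfies all of them --- so ampleness (positivity) of $H$ must enter, exactly as the positive-definiteness of the N\'eron--Tate quadratic form does on abelian varieties. The argument I would carry out: after replacing $X$ by the Zariski closure $Y$ of the forward orbit of $P$ (and a resolution of singularities, if needed), on which $\alpha$ is the spectral radius of the pullback action and $h_H(f^n(P))$ changes only by $O(1)$, invoke the known cases of the Kawaguchi--Silverman conjecture for points with Zariski-dense orbit to arrange that the top generalized $\alpha$-eigenspace contains an ample class $D_0$ with $\hat h_{D_0}(P)>0$, realized (using that the index of the Perron eigenvalue is maximal among the peripheral ones) as the bottom of a Jordan chain of length $t+1$; then for the top member $D_{\mathrm{top}}$ of the chain one has $h_{D_{\mathrm{top}}}(f^n(P))=c\,n^{t}\alpha^n\hat h_{D_0}(P)+O(n^{t-1}\alpha^n)$ with $c>0$, and since $H$ is ample $mH-D_{\mathrm{top}}$ is ample for some $m$, whence $h_H(f^n(P))\ge\tfrac1m h_{D_{\mathrm{top}}}(f^n(P))-O(1)\ge C_0n^{t}\alpha^n$ for all large $n$. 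Combining with the previous paragraph gives the two-sided bound for all $n\ge N_0$. The genuinely hard points are the reduction to a subvariety on which the orbit is dense and the relevant eigenclass can be taken ample, and the verification that the positivity of the ample height is not washed out by the complex peripheral eigenvalues.
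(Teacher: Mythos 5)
Your upper bound is essentially the paper's, modulo one bookkeeping subtlety: the paper works inside the span $V_H \subset \Pic(X)_\Q$ of $\{(f^n)^*H\}$ rather than in $\NS(X)_\R$. Jordan chains for $f^*$ are taken in $V_H$ with genuine linear equivalences $f^*D_{i,j}\sim D_{i,j-1}+\lambda_i D_{i,j}$, which is what makes the height recursions have $O(1)$ error; numerical equivalence in $\NS$ would only give $O(\sqrt{h_H})$ errors and muddy the argument. The paper also stays with the complex Jordan form and complex-valued heights rather than passing to real $2\times 2$ blocks, but that is cosmetic.

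The real gap is in your lower bound. You correctly sense that one must rule out cancellation among the peripheral eigenvalues, but you then misdiagnose this as a hard positivity problem requiring known cases of the Kawaguchi--Silverman conjecture, a passage to the Zariski closure of the orbit, and the existence of an ample class inside the top generalized eigenspace. None of that is needed, and indeed you admit you have not verified the key steps. The paper's device is much simpler and you miss it entirely: because $H$ is ample, for any $\C$-divisor $D$ the function $|h_D|$ is dominated by $\max\{1,h_H\}$ up to a multiplicative constant on the forward orbit of $P$ (this is the paper's Lemma on ``ample is strong,'' proved by choosing $C$ with $CH\pm D_r$, $CH\pm D_c$ ample). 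Applying this to each basis divisor $D_{i,j}$ and combining with the triangle inequality $h_H \le \sum_{i,j}|c_{i,j}|\,|h_{D_{i,j}}| + O(1)$ gives the two-sided comparison
\[
h_H(f^n(P)) \;\asymp\; \sum_{i,j}|c_{i,j}|\,\bigl|h_{D_{i,j}}(f^n(P))\bigr|.
\]
The right-hand side is a sum of nonnegative terms, so destructive interference is impossible by construction; each term's growth is read off from the Jordan block (Lemma on ``big heights'') and the maximum survives the sum. Thus ampleness does enter in an essential way, but as a domination of all heights by $h_H$, not as positivity of a canonical height inside a carefully chosen eigenspace. Your proposal as written would not yield the lower bound without substantial new work, and the route you sketch risks circularity, since identifying $\alpha_f(P)$ with a spectral radius is part of what the asymptotic $h_H(f^n(P))\asymp n^{t}\alpha^n$ is being used to prove.
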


As an application of Theorem \ref{Theorem: order of height growth},
we prove a variant of the Dynamical Mordell-Lang conjecture (see Section \ref{section: Backgrounds} for details).

\begin{thm}\label{Theorem: DML type theorem}
Let $X$ be a smooth projective variety over $\var{\Q}$, and
$f,g\colon X\longrightarrow X$ \'etale endomorphisms of $X$ over $\var{\Q}$.
Let $P,Q\in X(\var{\Q})$ be points satisfying the following two conditions:
\begin{itemize}
\item $\alpha_f(P)^p=\alpha_g(Q)^q>1$ for some $p,q \in \Z_{\geq 1}$, and
\item $t_f(P)=t_g(Q)$, where $t_f(P)$ and $t_g(Q)$ are as in Theorem \ref{Theorem: order of height growth}.
\end{itemize}
Then the set
\[
S_{f,g}(P,Q):= \{ (m,n)\in \Z_{\geq 0}\times \Z_{\geq 0}\ |\ f^m(P)=g^n(Q)\}
\]
is a finite union of the sets of the form
\[
\{(a_i+b_i\ell,c_i+d_i\ell)\ |\ \ell\in\Z_{\geq 0}\}
\]
for some non-negative integers $a_i,b_i,c_i,d_i\in \Z_{\geq 0}$.
\end{thm}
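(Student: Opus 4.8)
The plan is to use the height identity forced by $f^m(P)=g^n(Q)$, together with Theorem \ref{Theorem: order of height growth}, to pin the pair $(m,n)$ to finitely many lines in $\Z^2$, and then to reduce the problem on each such line to the already-known \'etale case of the Dynamical Mordell--Lang conjecture. Write $\alpha=\alpha_f(P)$, $\beta=\alpha_g(Q)$ and $t=t_f(P)=t_g(Q)$; since $\alpha^p=\beta^q>1$ we have $\alpha,\beta>1$, so Theorem \ref{Theorem: order of height growth} applies to both $(f,P)$ and $(g,Q)$, giving constants $C_0,C_1$ and a threshold $N_0$ for the first and $C_0',C_1'$ and $N_0'$ for the second.

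First I would bound $np-mq$ on $S_{f,g}(P,Q)$. For every $M$, the set of pairs in $S_{f,g}(P,Q)$ with $\min\{m,n\}\le M$ is finite: if $m$ is fixed then $f^m(P)$ is a fixed point and $h_H(g^n(Q))\to\infty$, so only finitely many $n$ can occur, and symmetrically. So it is enough to treat pairs with $m\ge N_0$ and $n\ge N_0'$. For these, $f^m(P)=g^n(Q)$ gives $h_H(f^m(P))=h_H(g^n(Q))$, and the two-sided bounds of Theorem \ref{Theorem: order of height growth} combine to
\[
\frac{C_0}{C_1'}\;<\;\frac{n^t\beta^n}{m^t\alpha^m}\;<\;\frac{C_1}{C_0'}.
\]
Taking logarithms, one first obtains that $n/m$ is bounded above and below: if $n_k/m_k\to\infty$ along a sequence in $S_{f,g}(P,Q)$, dividing $n_k\log\beta<m_k\log\alpha+t\log(m_k/n_k)+O(1)$ by $n_k$ forces $\log\beta\le 0$, contradicting $\beta>1$. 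Granting $\log(n/m)=O(1)$, the displayed inequality says $n\log\beta-m\log\alpha=-t\log(n/m)+O(1)=O(1)$, and since $p\log\alpha=q\log\beta$ this becomes $(np-mq)\log\beta=O(1)$. Hence $np-mq$ takes only finitely many integer values $v_1,\dots,v_r$ on $S_{f,g}(P,Q)$ away from a finite set. The hypothesis $t_f(P)=t_g(Q)$ is used crucially here: otherwise the right-hand side would grow like $(t_f(P)-t_g(Q))\log n$ and no such bound would follow.

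Next, fix $v=v_i$ and set $d=\gcd(p,q)$. The solutions of $np-mq=v$ with $m,n\ge 0$ form a single progression $\{(m_1+(p/d)j,\;n_1+(q/d)j):j\in\Z_{\ge 0}\}$ for suitable $m_1,n_1\in\Z_{\ge 0}$. Put $F=f^{p/d}$, $G=g^{q/d}$ (again \'etale), $A=f^{m_1}(P)$, $B=g^{n_1}(Q)$. Since $f$ commutes with its iterates, $f^{m_1+(p/d)j}(P)=F^j(A)$ and $g^{n_1+(q/d)j}(Q)=G^j(B)$, so the part of $S_{f,g}(P,Q)$ lying on this line corresponds, via $j$, to the return set
\[
\{\,j\in\Z_{\ge 0}:(F\times G)^j(A,B)\in\Delta\,\}
\]
of the \'etale endomorphism $F\times G$ of the smooth projective variety $X\times X$ to the diagonal $\Delta$. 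By the Dynamical Mordell--Lang theorem for \'etale endomorphisms in characteristic zero (Bell, Ghioca and Tucker), this set is a finite union of arithmetic progressions $\{u+w\ell:\ell\in\Z_{\ge 0}\}$; substituting into $j\mapsto(m_1+(p/d)j,n_1+(q/d)j)$ turns each into a set $\{(a+b\ell,c+d'\ell):\ell\in\Z_{\ge 0}\}$ of the required shape, with $a=m_1+(p/d)u$, $b=(p/d)w$, $c=n_1+(q/d)u$, $d'=(q/d)w$. Taking the union over $i=1,\dots,r$ and adjoining the finitely many remaining pairs (degenerate progressions with $b=d'=0$) completes the argument.

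I expect the height comparison in the second step, rather than the reduction to the classical theorem, to be the delicate point. The implied constants of Theorem \ref{Theorem: order of height growth} for $(f,P)$ and for $(g,Q)$ are a priori unrelated, so only the combination of the two-sided estimates with both hypotheses $\alpha_f(P)^p=\alpha_g(Q)^q$ and $t_f(P)=t_g(Q)$ makes $np-mq$ bounded; and one must first set aside the pairs with a small coordinate, where the asymptotics of Theorem \ref{Theorem: order of height growth} are not yet available.
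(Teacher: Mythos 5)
Your proof is correct and follows essentially the same strategy as the paper: both use the two-sided asymptotics of Theorem \ref{Theorem: order of height growth} together with $h_H(f^m(P))=h_H(g^n(Q))$ to bound a linear form in $(m,n)$ to finitely many values, and then reduce each resulting line in $\Z_{\geq 0}^2$ to Bell--Ghioca--Tucker's Dynamical Mordell--Lang theorem for an \'etale self-map of $X\times X$ and the diagonal. The only organizational difference is that the paper first replaces $f,g$ by $f^p,g^q$ (Lemma \ref{Lemma: reduction}), which makes $\alpha_{f^p}(P)=\alpha_{g^q}(Q)$ and lets one bound $|m-n|$ cleanly by noting that when $m\geq n$ the factor $(m/n)^{t_f(P)}\geq 1$ can simply be discarded, whereas you keep general $p,q$, bound $np-mq$ directly (at the price of a slightly longer argument that $n/m$ stays bounded), and then absorb the passage to iterates into the maps $F=f^{p/d}$, $G=g^{q/d}$ when applying the DML theorem on each line.
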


We now briefly sketch the plan of this paper.
In Section \ref{section: Notation}, we fix some notation.
In Section \ref{section: proof of order},
we prove Theorem \ref{Theorem: order of height growth}.
In Section \ref{section: Backgrounds}, we provide backgrounds
of Theorem \ref{Theorem: DML type theorem},
and recall known results related to this theorem.
In Section \ref{section: proof of DML type},
we prove Theorem \ref{Theorem: DML type theorem}.
It seems plausible that we can generalize the assertion of
Theorem \ref{Theorem: DML type theorem} further.
We give a conjecture
(Conjecture \ref{Conjecture: DML type conjecture})
generalizing Theorem \ref{Theorem: DML type theorem},
and some evidence in Section \ref{section: examples}.
Furthermore, to see that the results given
in Section \ref{section: examples} support our conjecture,
we give a definition of the double canonical height in a special case
(see the proof of Theorem \ref{Theorem: DML type finiteness}).

\section{Notation and definitions}\label{section: Notation}
Let $X$ be a smooth projective variety, and $f\colon X\longrightarrow X$ a surjective endomorphism of $X$
both over $\var{\Q}$.
We denote the group of divisors, the Picard group, and the N\'eron-Severi group
by $\Div(X)$, $\Pic(X)$, and $\NS(X)$, respectively.
For a divisor $D$ on $X$, fix a Weil height function
$h_D\colon X(\var{\Q})\longrightarrow \R$ associated with $D$;
see \cite[Theorem 8.3.2]{HS}.
For a $\C$-divisor $D=\sum_{i=1}^r a_i D_i\ (a_i\in\C, D_i\in \Div(X))$,
we put $h_D:=\sum_{i=1}^r a_i h_{D_i}$.

\begin{dfn}
Let $H$ be an ample divisor on $X$ over $\var{\Q}$,
and $P\in X(\var{\Q})$ a point.
The {\it arithmetic degree} $\alpha_f(P)$ of $f$ at $P$ is defined by
\[
\alpha_f(P):=\lim_{n\to \infty} \max\{ 1, h_H(f^n(P))\}^{1/n}.
\]
\end{dfn}

\begin{rmk}\label{rmk: arith}
The existence of the arithmetic degree for surjective endomorphisms
is proved by Kawaguchi and Silverman
(see \cite[Theorem 3]{KS2}).
They also proved that $\alpha_f(P)$ is independent of the choice of $H$ and $h_H$.
\end{rmk}
\begin{dfn}
For a column vector $v={}^t(x_0,\ldots ,x_N)\in \C^{N+1}$, we set 
\[\|v\|:=\max_{0\leq i\leq N}\{ |x_i| \}.\]
For a square matrix $A=(a_{i,j})\in M_{N+1}(\C)$,
we similarly set
\[\|A\|:=\max_{0\leq i,j\leq N}\{|a_{i,j}|\}.\]
We frequently use the following inequality
\[
\| Av \|\leq (N+1)\|A\|\cdot \|v\|.
\]
\end{dfn}
\begin{dfn}\label{dfn: asymp}
For sequences $(a_n)_{n\geq 0}$ and $(b_n)_{n\geq 0}$ of positive real numbers, we write
$a_n \preceq b_n$
if there is a positive real number $C>0$ and an integer $N_0$ such that the inequality
$a_n \leq C\cdot b_n$
holds  for all $n\geq N_0$.
If both $a_n\preceq b_n$ and $b_n\preceq a_n$ hold, we write $a_n \asymp b_n$.
\end{dfn}

\section{Proof of Theorem \ref{Theorem: order of height growth}}\label{section: proof of order}
First, we give some lemmata on linear algebra.
Then, we shall prove Theorem \ref{Theorem: order of height growth}.

For a non-negative integer $\ell\in\Z_{\geq 0}$ and a complex number $\lambda\in\C$,
let
\[ \Lambda:=\left(
\begin{array}{ccccc}
\lambda & & & & \\
1 &\lambda & &O & \\
 &1 &\ddots & & \\
O & &\ddots &\lambda & \\
 & & &1 &\lambda
\end{array}
\right)
\]
be the Jordan block matrix of the size $(\ell+1)\times (\ell+1)$.
We put $N:=\Lambda-\lambda I$.

\begin{lem}\label{Lemma: asymp of Jordan matrix}
\begin{parts}
\Part{(a)}
When $|\lambda|\geq 1$, we have
$\|\Lambda^n\| \asymp n^\ell |\lambda|^n.$
\Part{(b)} When $|\lambda|<1$, we have $\| \Lambda^n\| \leq n^\ell |\lambda|^{n-\ell}$.
\end{parts}
\end{lem}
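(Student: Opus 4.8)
The statement is an elementary fact about powers of a Jordan block, so the proof is a direct computation. First I would use the fact that $N = \Lambda - \lambda I$ is nilpotent with $N^{\ell+1} = 0$, and that $N$ commutes with $\lambda I$, so that the binomial theorem applies:
\[
\Lambda^n = (\lambda I + N)^n = \sum_{k=0}^{\ell} \binom{n}{k} \lambda^{n-k} N^k
\]
for all $n \geq \ell$ (the sum truncates at $k = \ell$ because $N^k = 0$ for $k > \ell$). Since each $N^k$ for $0 \le k \le \ell$ is the matrix with $1$'s on the $k$-th subdiagonal and $0$'s elsewhere, these matrices have disjoint supports, so the entries of $\Lambda^n$ are exactly the coefficients $\binom{n}{k}\lambda^{n-k}$, each appearing (for $k \le \ell$, and with $k=0$ always present). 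Hence $\|\Lambda^n\| = \max_{0 \le k \le \ell} \binom{n}{k} |\lambda|^{n-k}$.

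For part (b), where $|\lambda| < 1$, I would simply bound each term: $\binom{n}{k} \le n^k \le n^\ell$ (using $n \ge \ell \ge k$, say, or just $n \ge 1$), and $|\lambda|^{n-k} \le |\lambda|^{n-\ell}$ since $|\lambda| < 1$ and $k \le \ell$. Taking the maximum over $k$ gives $\|\Lambda^n\| \le n^\ell |\lambda|^{n-\ell}$, which is the claimed inequality.

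For part (a), where $|\lambda| \ge 1$, the upper bound $\|\Lambda^n\| \preceq n^\ell |\lambda|^n$ follows from $\binom{n}{k}|\lambda|^{n-k} \le n^\ell |\lambda|^n$ (here $|\lambda|^{-k} \le 1$). For the matching lower bound $\|\Lambda^n\| \succeq n^\ell |\lambda|^n$, I would isolate the $k = \ell$ term: $\binom{n}{\ell} |\lambda|^{n-\ell} \sim \frac{n^\ell}{\ell!} |\lambda|^{n-\ell} = \frac{|\lambda|^{-\ell}}{\ell!} \, n^\ell |\lambda|^n$, and since $\|\Lambda^n\|$ is the maximum over all $k$, it is at least this single term; thus $\|\Lambda^n\| \ge \binom{n}{\ell} |\lambda|^{n-\ell} \succeq n^\ell |\lambda|^n$. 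Combining gives $\|\Lambda^n\| \asymp n^\ell |\lambda|^n$.

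**Main obstacle.** There is no real obstacle here — the only points requiring a moment of care are (i) making sure the binomial expansion is valid, which it is because $\lambda I$ and $N$ commute and $N$ is nilpotent of index $\ell+1$, and (ii) observing that the various $N^k$ have disjoint supports so that no cancellation occurs among the terms and the sup-norm of $\Lambda^n$ is genuinely the maximum of the individual coefficients' absolute values, rather than merely bounded by their sum. Once that structural observation is in place, parts (a) and (b) are one-line estimates on binomial coefficients and geometric factors. I would present the expansion and the support observation first, then dispatch (a) and (b) in turn.
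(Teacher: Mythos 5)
Your proof is correct and takes essentially the same approach as the paper: expand $\Lambda^n=(\lambda I+N)^n$ by the binomial theorem, truncate at $k=\ell$ using $N^{\ell+1}=0$, observe that the $N^k$ have disjoint supports so $\|\Lambda^n\|=\max_{0\le k\le\ell}\binom{n}{k}|\lambda|^{n-k}$, and then apply $\binom{n}{k}\asymp n^k$ and $\binom{n}{k}\le n^k$. The paper's proof is just a more terse version of the same computation.
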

\begin{proof}
Note that $\binom{n}{k}\asymp n^k$ and $\binom{n}{k}\leq n^k$.
Then both assertions follow from the following equalities:
\begin{align*}
\|\Lambda^n \| &= \|(\lambda I+N)^n\|\\
&=\left\| \sum_{k=0}^n \binom{n}{k}\lambda^{n-k}N^k\right\|\\
&=\left\| \sum_{k=0}^\ell \binom{n}{k}\lambda^{n-k}N^k\right\| &\text{since } N^{\ell+1}=0\\
&=\max_{0\leq k \leq \ell} \binom{n}{k}|\lambda|^{n-k}.
\end{align*}
\end{proof}

\begin{lem}\label{Lemma: big heights}
Assume $|\lambda|>1$.
For a non-zero column vector $v={}^t(x_0,\ldots ,x_\ell)\in \C^{\ell+1}\setminus \{0\}$,
we have
\[
\|\Lambda^n v\| \asymp n^t |\lambda |^n,
\]
where we put
\[
t:=\ell-\min\{ i\ |\ 0\leq i \leq \ell, \ x_i\neq 0\}.
\]
\end{lem}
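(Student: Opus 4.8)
The plan is to compute $\Lambda^n v$ explicitly using the nilpotent expansion $\Lambda^n = (\lambda I + N)^n = \sum_{k=0}^{\ell} \binom{n}{k}\lambda^{n-k}N^k$ from the previous lemma, and then identify the entry of $\Lambda^n v$ that dominates asymptotically. First I would record that if $i_0 := \min\{i : x_i \neq 0\}$, then $t = \ell - i_0$, so the claim is that $\|\Lambda^n v\| \asymp n^{\ell - i_0}|\lambda|^n$. The key structural fact is that $N$ is the lower-shift operator: $N^k$ sends the standard basis vector $e_j$ to $e_{j+k}$ when $j+k \le \ell$ and to $0$ otherwise. Consequently the $j$-th coordinate of $\Lambda^n v$ is $(\Lambda^n v)_j = \sum_{k=0}^{j} \binom{n}{k}\lambda^{n-k} x_{j-k}$, a sum over how far down we shift.

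Next I would prove the upper bound $\|\Lambda^n v\| \preceq n^{\ell - i_0}|\lambda|^n$. Using $\|\Lambda^n v\| \le (\ell+1)\|\Lambda^n\|\cdot\|v\|$ together with Lemma \ref{Lemma: asymp of Jordan matrix}(a), we get $\|\Lambda^n v\| \preceq n^\ell |\lambda|^n$; but this has the wrong exponent $\ell$ rather than $\ell - i_0$. So instead I would argue coordinatewise: for each $j$, $|(\Lambda^n v)_j| \le \sum_{k=0}^{j}\binom{n}{k}|\lambda|^{n-k}\|v\| \preceq n^j |\lambda|^n$ since $\binom{n}{k} \preceq n^k \le n^j$ and $|\lambda|^{-k}$ is bounded. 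But this only uses $j \le \ell$, still giving $n^\ell$. The sharper observation: the term $x_{j-k}$ is zero whenever $j - k < i_0$, i.e.\ whenever $k > j - i_0$; so the sum over $k$ in coordinate $j$ really runs only $0 \le k \le j - i_0$ (empty if $j < i_0$), hence $|(\Lambda^n v)_j| \preceq n^{j - i_0}|\lambda|^n \preceq n^{\ell - i_0}|\lambda|^n$ for all $j$. Taking the max over $j$ gives the upper bound.

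Finally I would prove the lower bound $\|\Lambda^n v\| \succeq n^{\ell - i_0}|\lambda|^n$ by exhibiting one large coordinate, namely $j = \ell$. There, $(\Lambda^n v)_\ell = \sum_{k=0}^{\ell - i_0}\binom{n}{k}\lambda^{n-k}x_{\ell - k}$, and the $k = \ell - i_0$ term is $\binom{n}{\ell - i_0}\lambda^{n - \ell + i_0} x_{i_0}$ with $x_{i_0} \neq 0$; this term has size $\asymp n^{\ell - i_0}|\lambda|^n$, while every other term with $k < \ell - i_0$ has size $\preceq n^k |\lambda|^n = o(n^{\ell - i_0}|\lambda|^n)$. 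Hence $|(\Lambda^n v)_\ell| \ge \bigl(\text{leading term}\bigr) - \bigl(\text{lower-order terms}\bigr) \succeq n^{\ell - i_0}|\lambda|^n$ for $n$ large, which gives $\|\Lambda^n v\| \succeq n^{\ell - i_0}|\lambda|^n$. Combining the two bounds yields the asymptotic. The only mildly delicate point is bookkeeping the index shifts correctly so that the dominant term is pinned down as $\binom{n}{\ell - i_0}\lambda^{n-\ell+i_0}x_{i_0}$ in the last coordinate; once that is set up, the estimate is a routine comparison of binomial coefficients using $\binom{n}{k}\asymp n^k$ and the hypothesis $|\lambda|>1$ (so that the powers $|\lambda|^{-k}$ do not interfere).
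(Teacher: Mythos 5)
Your proposal is correct and takes essentially the same route as the paper: expand $\Lambda^n = \sum_{k=0}^{\ell}\binom{n}{k}\lambda^{n-k}N^k$, bound each coordinate above using $\binom{n}{k}\preceq n^k$ and $|\lambda|^{-k}\le 1$, and obtain the lower bound by isolating the dominant term $\binom{n}{\ell-i_0}\lambda^{n-\ell+i_0}x_{i_0}$ in the last coordinate. The only cosmetic difference is that the paper first reduces to the case $t=\ell$ (i.e.\ $x_0\neq 0$) by restricting to the invariant subspace of coordinates $\{i_0,\dots,\ell\}$, whereas you carry the offset $i_0$ explicitly through the estimates; this also lets you state the sharper coordinatewise bound $|(\Lambda^n v)_j|\preceq n^{j-i_0}|\lambda|^n$, which the paper does not need.
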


\begin{proof}
We may assume $t=\ell$, so $x_0\neq 0$.
For a negative integer $j<0$, we set $x_j := 0$.
The asymptotic inequality
$\| \Lambda^n v\|\preceq n^\ell |\lambda|^n$
follows from the following inequalities.
\begin{align*}
|(\Lambda^n v)_j|
&= \left| \sum_{k=0}^\ell \binom{n}{k}\lambda^{n-k}(N^k v)_j \right|\\
&= \left| \sum_{k=0}^\ell \binom{n}{k}\lambda^{n-k}x_{j-k} \right|\\
&\leq \sum_{k=0}^j n^k\cdot|\lambda|^n \cdot |x_{j-k}|\\
&\leq (\ell+1)\cdot n^\ell \cdot |\lambda|^n \cdot \| v \|
\end{align*}
The converse asymptotic inequality
$\| \Lambda^n v\|\succeq n^\ell |\lambda|^n$
follows from the following asymptotic inequalities.
\begin{align*}
|(\Lambda^n v)_\ell |
&= \left|\sum_{k=0}^\ell\binom{n}{k}\lambda^{n-k}x_{\ell-k}\right|\\
&\geq \binom{n}{\ell}|\lambda^{n-\ell}x_0|
- \sum_{k=0}^{\ell-1}\left| \binom{n}{k}\lambda^{n-k}x_{\ell-k}\right|\\
&\succeq n^\ell |\lambda|^n- n^{\ell-1}|\lambda|^n\\
&\succeq n^\ell |\lambda|^n.
\end{align*}
Hence we conclude $\| \Lambda^n v\|\asymp n^\ell |\lambda|^n$.
\end{proof}

Let notation be the same as in Theorem \ref{Theorem: order of height growth}.
Let $V_H$ be the $\Q$-vector subspace of $\Pic(X)_\Q := \Pic(X)\otimes _\Z \Q$
spanned by the set $\{ (f^n)^\ast H\ |\ n\geq 0\}$,
and $\var{V_H}$ the image of $V_H$ in $\NS(X)_\Q:= \NS(X)\otimes_\Z \Q$.
It is known that $V_H$ is a finite dimensional $\Q$-vector space
(see the proof of \cite[Theorem 3]{KS2}).
We decompose the $\C$-vector space $(V_H)_\C:=V_H\otimes_\Z \C$
into Jordan blocks with respect to the $\C$-linear map
$f^\ast \colon (V_H)_\C \longrightarrow (V_H)_\C$:
\[
(V_H)_\C =\bigoplus_{i=1}^\tau V_i.\]
For each $1\leq i\leq \tau$, let $\lambda_i\in\C$ be the eigenvalue of $f^\ast |_{V_i}$.
By changing the order of the Jordan blocks if necessary, we may assume
\[
|\lambda_1|\geq |\lambda_2| \geq \cdots \geq |\lambda_\sigma| > 1 \geq |\lambda_{\sigma+1}|\geq\cdots \geq |\lambda_{\tau}|
\]
for some $0\leq \sigma \leq \tau$.
We put $\ell_i :=\dim_\C V_i-1$.

We take a $\C$-basis $\{ D_{i,j}\}_{0\leq j\leq \ell_i}$ of $V_i$
satisfying the following linear equivalences:
\begin{equation}\label{eqn: linearly equivalence}
f^\ast D_{i,j}\sim D_{i,j-1}+\lambda_i D_{i,j} \quad (0\leq j \leq \ell_i),
\end{equation}
where we set $D_{i,-1}:=0$.
For each $1\leq i \leq \sigma$, let
\[\widehat{h}_{D_{i,j}}\colon X(\var{\Q}) \longrightarrow \C\quad (0\leq j \leq \ell_i)
\]
be unique functions satisfying the normalization condition
\[
\widehat{h}_{D_{i,j}}=h_{D_{i,j}}+O(1)\quad (0\leq j\leq \ell_i)
\]
and the functional equation
\[
\widehat{h}_{D_{i,j}}\circ f = \widehat{h}_{D_{i,j-1}}+\lambda_i \widehat{h}_{D_{i,j}}
\quad (0\leq j\leq \ell_i)
\]
(see \cite[Theorem 5]{KS2} for the existence of such functions).

For each $1\leq i \leq \tau$, we set
\[
\h_{\D_i}:={}^t(h_{D_{i,0}},h_{D_{i,1}},\ldots , h_{D_{i,\ell_i}}).\]
For each $1\leq i \leq \sigma$, we set
\[
\widehat{\h}_{\D_i}:={}^t(\widehat{h}_{D_{i,0}},\widehat{h}_{D_{i,1}},\ldots , \widehat{h}_{D_{i,\ell_i}}).
\]
Let $\Lambda_i$ be the Jordan block matrix of size $(\ell_i+1)\times (\ell_i+1)$
associated with the eigenvalue $\lambda_i$.

\begin{lem}\label{Lemma: small heights}
For each $\sigma +1\leq i\leq \tau$ and a point $P\in X(\var{\Q})$,
we have
\[
\|\h_{\D_i}(f^n(P))\|\preceq n^{\ell_i+1}.
\]
\end{lem}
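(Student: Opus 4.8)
The plan is to convert the linear equivalences \eqref{eqn: linearly equivalence} into a linear recursion for the vector of Weil heights $\h_{\D_i}$ and to iterate it, controlling the errors that accumulate with the growth estimates of Lemma \ref{Lemma: asymp of Jordan matrix}. Applying functoriality of the Weil height machine to $f^{\ast}D_{i,j}\sim D_{i,j-1}+\lambda_i D_{i,j}$ gives, for each $0\le j\le\ell_i$,
\[
h_{D_{i,j}}\circ f = h_{D_{i,j-1}}+\lambda_i h_{D_{i,j}}+O(1),
\]
and, with the convention $D_{i,-1}=0$, collecting these into the vector $\h_{\D_i}$ produces an identity of $\C^{\ell_i+1}$-valued functions on $X(\var{\Q})$,
\[
\h_{\D_i}\circ f = \Lambda_i\,\h_{\D_i}+e,
\]
where $e$ is a function with $\|e(Q)\|\le C$ for all $Q\in X(\var{\Q})$, the constant $C$ being the $O(1)$-constant attached to this fixed finite set of linear equivalences.

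Next I would iterate this identity. For every $n\ge 1$ one obtains
\[
\h_{\D_i}(f^n(P)) = \Lambda_i^{\,n}\,\h_{\D_i}(P)+\sum_{k=0}^{n-1}\Lambda_i^{\,k}\,e\bigl(f^{\,n-1-k}(P)\bigr),
\]
and then taking norms and using the basic inequality $\|Av\|\le (N+1)\|A\|\cdot\|v\|$ gives
\[
\|\h_{\D_i}(f^n(P))\| \le (\ell_i+1)\,\|\Lambda_i^{\,n}\|\cdot\|\h_{\D_i}(P)\| + (\ell_i+1)\,C\sum_{k=0}^{n-1}\|\Lambda_i^{\,k}\|.
\]

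Finally, since $\sigma+1\le i\le\tau$ means $|\lambda_i|\le 1$, Lemma \ref{Lemma: asymp of Jordan matrix} yields $\|\Lambda_i^{\,k}\|\preceq k^{\ell_i}$: this is part (a) when $|\lambda_i|=1$, and it follows from part (b) when $|\lambda_i|<1$, since then $|\lambda_i|^{k-\ell_i}\le 1$ for $k\ge\ell_i$. Consequently the first term on the right is $\preceq n^{\ell_i}$, while $\sum_{k=0}^{n-1}\|\Lambda_i^{\,k}\|\preceq\sum_{k=0}^{n-1}k^{\ell_i}\preceq n^{\ell_i+1}$, and therefore $\|\h_{\D_i}(f^n(P))\|\preceq n^{\ell_i+1}$. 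The argument is essentially mechanical, built entirely on the two linear-algebra lemmas already established; the only point that genuinely needs care is that the bound $\|e(Q)\|\le C$ must be uniform in $Q$, for otherwise the telescoped error sum could not be estimated term by term against $C\sum_k\|\Lambda_i^{\,k}\|$.
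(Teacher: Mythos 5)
Your proof is correct and follows essentially the same approach as the paper's. The paper establishes the same uniform $O(1)$ bound $\|\h_{\D_i}\circ f-\Lambda_i\h_{\D_i}\|\leq C_0$ from the linear equivalence \eqref{eqn: linearly equivalence}, then bounds $\|\h_{\D_i}(f^n(P))-\Lambda_i^n\h_{\D_i}(P)\|$ by a telescoping sum whose terms are exactly $\Lambda_i^k e(f^{n-1-k}(P))$; your explicitly iterated recursion $\h_{\D_i}(f^n(P))=\Lambda_i^n\h_{\D_i}(P)+\sum_{k=0}^{n-1}\Lambda_i^k e(f^{n-1-k}(P))$ is the same identity written in closed form, and the subsequent estimates via Lemma \ref{Lemma: asymp of Jordan matrix} coincide.
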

\begin{proof}
By \eqref{eqn: linearly equivalence}, there is a positive real number $C_0>0$
such that the inequality
\begin{equation}
\| \h_{\D_i}\circ f-\Lambda_i \cdot \h_{\D_i} \| \leq C_0
\end{equation}
holds on $X(\var{\Q})$.
Therefore, there is a positive real number $C_1>0$ such that for every point
$P\in X(\var{\Q})$, the following inequalities hold:
\begin{align}
&\quad\| \h_{\D_i}\circ f^n (P)\|-\|\Lambda_i^n \cdot \h_{\D_i}(P)\|\\
&\leq \|\h_{\D_i}\circ f^n(P) -\Lambda_i^n\cdot \h_{\D_i}(P)\|\\
&\leq \sum_{k=0}^{n-1}\|\Lambda_i^k\cdot (\h_{\D_i}\circ f^{n-k}(P))
-\Lambda_i^{k+1}\cdot(\h_{\D_i}\circ f^{n-k-1}(P)) \|\\
&\leq \sum_{k=0}^{n-1} (\ell_i+1)\| \Lambda_i^k \|\cdot \| \h_{\D_i}(f^{n-k}(P))-
\Lambda_i \cdot \h_{\D_i}( f^{n-k-1}(P))\|\\
&\leq \sum_{k=0}^{n-1}(\ell_i+1)\| \Lambda_i^k\|C_0\\
&\leq \sum_{k=0}^{n-1}(\ell_i+1) \cdot n^{\ell_i} \cdot |\lambda_i|^{k-\ell_i} \cdot C_0
&\hspace{-2em}\text{by Lemma \ref{Lemma: asymp of Jordan matrix}}\\
&\leq C_1 n^{\ell_i+1}
&\text{because } |\lambda_i|\leq 1.
\end{align}
Furthermore, we have
\begin{align}
\| \Lambda_i^n\cdot \h_{\D_i}(P) \|
&\leq (\ell_i+1)\cdot\|\Lambda_i^n\|\cdot \| \h_{\D_i}(P)\|\\
&\leq (\ell_i+1)\cdot n^{\ell_i}\cdot |\lambda_i|^{n-\ell_i}\cdot\| \h_{\D_i}(P)\|
&\text{by Lemma \ref{Lemma: asymp of Jordan matrix}}\\
&\leq (\ell_i+1)\cdot n^{\ell_i}\cdot\| \h_{\D_i}(P)\|
&\text{because } |\lambda_i|\leq 1.
\end{align}
Combining these inequalities, the assertion is proved.
\end{proof}

\begin{lem}[{see \cite[Lemma 18]{KS2}}]\label{Lemma: ample is strong}
For a $\C$-divisor $D$ on $X$
and a point $P\in X(\var{\Q})$,
we have
\[
\max\{1,h_H(f^n(P))\} \succeq |h_D(f^n(P))|.
\]
\end{lem}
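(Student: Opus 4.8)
The plan is to prove the stronger statement that $|h_D|\preceq\max\{1,h_H\}$ holds for \emph{all} points of $X(\var\Q)$, not just along the orbit of $P$; this uses nothing about $f$ and rests only on the positivity of heights attached to ample divisors. First I would reduce to honest divisors: writing $D=\sum_{i=1}^{r}a_iD_i$ with $a_i\in\C$ and $D_i\in\Div(X)$, we have $h_D=\sum_i a_ih_{D_i}$, hence $|h_D|\leq\sum_i|a_i|\,|h_{D_i}|$, so it suffices to bound each $|h_{D_i}|$ by $h_H$ up to additive constants.

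Fix an honest divisor $D_i$. Since $H$ is ample and the ample cone of $X$ in $\NS(X)_\R$ is open, there is $m_i\in\Z_{\geq 1}$ such that $m_iH+D_i$ and $m_iH-D_i$ are both ample divisors. I would then invoke the standard fact that the Weil height of an ample divisor is bounded below on $X(\var\Q)$ (a suitable positive multiple is very ample, so its height differs by $O(1)$ from the pullback of the non-negative standard height on projective space). Applying this to $m_iH\pm D_i$ and using additivity of Weil heights yields a constant $b_i>0$ with $|h_{D_i}(Q)|\leq m_ih_H(Q)+b_i$ for all $Q\in X(\var\Q)$. Summing over $i$ (the case $D=0$ being trivial) produces constants $M,B>0$ with
\[
|h_D(Q)|\leq M\,h_H(Q)+B\qquad\text{for all }Q\in X(\var\Q);
\]
since the left-hand side is non-negative, this in particular forces $h_H(Q)\geq -B/M$ for every $Q$.

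It remains to turn this additive bound into the required multiplicative one. Evaluating at $Q=f^n(P)$: if $h_H(f^n(P))\geq 1$, then $M\,h_H(f^n(P))+B\leq(M+B)\,h_H(f^n(P))=(M+B)\max\{1,h_H(f^n(P))\}$; and if $h_H(f^n(P))<1$, then $-B/M\leq h_H(f^n(P))<1$, so $M\,h_H(f^n(P))+B<M+B=(M+B)\max\{1,h_H(f^n(P))\}$. In either case $|h_D(f^n(P))|\leq(M+B)\max\{1,h_H(f^n(P))\}$ for all $n\geq 0$, which is exactly the asserted relation $\max\{1,h_H(f^n(P))\}\succeq|h_D(f^n(P))|$.

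I do not expect a genuine obstacle here: the substance is the positivity of ample Weil heights together with openness of the ample cone, both standard (cf. \cite{HS}). The only point that needs a moment's thought is that the bound one obtains directly is additive, $|h_D|\leq Mh_H+B$, and converting it into a bound by a constant multiple of $\max\{1,h_H(f^n(P))\}$ relies on $h_H$ being bounded below — which, conveniently, follows from the additive bound itself.
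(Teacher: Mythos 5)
Your proof is correct and rests on the same core idea as the paper's: use openness of the ample cone to choose a positive multiple $CH$ with $CH\pm D$ ample, and use that Weil heights of ample divisors are bounded below to obtain an additive estimate $|h_D|\le M\,h_H+B$ on $X(\var{\Q})$. The genuine difference is in how this additive estimate is upgraded to a bound by a constant multiple of $\max\{1,h_H\}$. The paper splits into cases: if the forward $f$-orbit of $P$ is finite the claim is immediate, and otherwise Northcott's theorem gives $h_H(f^n(P))\to\infty$, which makes the additive constant negligible for large $n$. You instead observe that the additive bound itself (together with $|h_D|\geq 0$) forces $h_H\geq -B/M$ everywhere, so a single constant $M+B$ works uniformly over all of $X(\var{\Q})$. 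This removes the finite/infinite orbit dichotomy, the appeal to Northcott, and indeed any reference to $f$ or to orbits at all; you are proving the stronger pointwise statement $|h_D(Q)|\leq (M+B)\max\{1,h_H(Q)\}$ for every $Q\in X(\var{\Q})$, of which the lemma is a special case. One further cosmetic difference: the paper reduces only from $\C$-divisors to $\R$-divisors via real and imaginary parts, whereas you go all the way to integral divisors; both are fine since the ample-cone argument works for $\R$-divisors directly. Your version is slightly more self-contained and a bit cleaner, but both arguments are sound.
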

\begin{proof}
The assertion is obviously true when the forward $f$-orbit of $P$ is a finite set.
Hence, we may assume the forward $f$-orbit of $P$ is an infinite set.
Thus we may assume
\begin{equation}\label{eqn: infty}
h_H(f^n(P))\to \infty \quad (\text{as }n\to \infty).
\end{equation}
Write $D= D_r+ \sqrt{-1}D_c$, where $D_r$ and $D_c$ are $\R$-divisors on $X$.
By the triangle inequality, it is enough to prove the assertion for $D_r$ and $D_c$.
Thus we may assume $D$ is an $\R$-divisor.

Take a sufficiently large positive real number $C>0$
such that $CH\pm D$ are ample.
The function $C\cdot h_H- |h_D|$ is bounded below on $X(\var{\Q})$.
There is a (not necessarily positive) real number $C'\in \R$ satisfying
\begin{align}\label{eqn: ample is strong}
Ch_H(f^n(P))- | h_D(f^n(P)) | \geq C'
\end{align}
for all $P\in X(\var{\Q})$.
By \eqref{eqn: infty} and \eqref{eqn: ample is strong}, the assertion follows.
\end{proof}
\begin{proof}[Proof of Theorem \ref{Theorem: order of height growth}]
Write $H$ in terms of the $\C$-linear bases of $(V_H)_\C$:
\[
H=\sum_{i=1}^{\tau}\sum_{j=0}^{\ell_i} c_{i,j} D_{i,j} \quad (c_{i,j}\in \C).
\]
Let $P\in X(\var{\Q})$ be a point with $\alpha_f(P)>1$.
If $\widehat{\h}_{\D_i}(P)=0$ for all $1\leq i \leq \sigma$, we have the following inequalities:
\begin{align}
&\hphantom{\leq}h_H(f^n(P))\\
&\leq \left|\sum_{i=1}^{\sigma}\sum_{j=0}^{\ell_i} c_{i,j}\widehat{h}_{D_{i,j}}(f^n(P))\right| +
\left| \sum_{i=\sigma+1}^{\tau}\sum_{j=0}^{\ell_i} c_{i,j}h_{D_{i,j}}(f^n(P))\right| +O(1)\\
&= \left| \sum_{i=\sigma+1}^{\tau}\sum_{j=0}^{\ell_i}c_{i,j}h_{D_{i,j}}(f^n(P))\right| +O(1)\\
&\preceq n^{\max_i \ell_i+1} &\hspace{-2em}\text{by Lemma \ref{Lemma: small heights}.}
\end{align}
Thus we get
\[
\alpha_f(P)\leq\lim_{n\to\infty}n^{(\max_i \ell_i+1)/n}=1.
\]
But this contradicts $\alpha_f(P)>1$.
Hence we get $\widehat{\h}_{\D_i}(P)\neq 0$ for some $1\leq i \leq \sigma$.

We set
\[
\lambda := \max\{ |\lambda_i|\ |\ 1\leq i\leq \sigma,\ \widehat{\h}_{\D_i}(P)\neq 0\}.
\]
If $\widehat{h}_{D_{i,j}}(P)= 0$, we set $t_{f,i}(P):=-\infty$.
Otherwise, we set
\[
t_{f,i}(P) := \ell_i-\min\{ j\ |\ 0\leq j \leq \ell_i,\ \widehat{h}_{D_{i,j}}(P)\neq 0\}.
\]
Finally, we set
\[
t_f(P) :=\max\{ t_{f,i}(P)\ |\ \lambda=|\lambda_i|\}.
\]
Since $\widehat{\h}_{\D_i}(P)\neq 0$ holds for some $1\leq i \leq \sigma$,
we get $t_f(P)\neq -\infty$.
It is enough to prove
\[
h_H(f^n(P))\asymp n^{t_f(P)}\lambda^n.
\]
Note that $\lambda=\alpha_f(P)$ follows from this asymptotic inequality.
We have
\begin{align}\label{eqn: both bound}
\hspace{3em}h_H(f^n(P))
&\leq \left| \sum_{i=1}^{\tau}\sum_{j=0}^{\ell_i}c_{i,j}h_{D_{i,j}}(f^n(P))\right| +O(1)\\
&\leq \sum_{i=1}^{\tau}\sum_{j=0}^{\ell_i}|c_{i,j}|\cdot |h_{D_{i,j}}(f^n(P))| +O(1)\\
&\preceq \sum_{i=1}^{\tau}\sum_{j=0}^{\ell_i}|c_{i,j}|\cdot h_H(f^n(P))
&\text{by Lemma \ref{Lemma: ample is strong}}\\
&\preceq h_H(f^n(P)).
\end{align}
By the equality $\widehat{\h}_{\D_i}(f(P))=\Lambda_i\widehat{\h}_{\D_i}(P)$ and Lemma \ref{Lemma: big heights}, we get
\begin{equation}\label{eqn: big heights}
\|\widehat{\h}_{\D_i}(f^n(P))\| \asymp n^{t_{f,i}(P)}|\lambda_i|^n\quad (1\leq i\leq \sigma).
\end{equation}
Combining Lemma \ref{Lemma: small heights} and \eqref{eqn: big heights},
we conclude
\[
\sum_{i=1}^{\tau}\sum_{j=0}^{\ell_i}|c_{i,j}|\cdot |h_{D_{i,j}}(f^n(P))| \asymp n^{t_f(P)}\lambda^n.
\]
The assertion follows from this asymptotic equality and \eqref{eqn: both bound}.
\end{proof}

\section{Backgrounds and general conjectures} \label{section: Backgrounds}
Theorem \ref{Theorem: DML type theorem} gives a positive answer
to a variant of the Dynamical Mordell-Lang conjecture for pairs of \'etale endomorphisms, which is
a variant of the original one stated by Bell, Ghioca, and Tucker
(see \cite[Question 5.11.0.4]{BGT2}).
In \cite{GTZ1} and \cite{GTZ2}, Ghioca, Tucker, and Zieve studied similar problems for polynomial maps and got deeper results.
Moreover, they introduced some of reductions including Lemma \ref{Lemma: sufficient condition} we use.
In \cite{GN}, Ghioca and Nguyen also studied similar problems for self-maps of semi-abelian varieties.

First, we recall a version of the Dynamical Mordell-Lang conjecture.
Note that there are several variants of the Dynamical Mordell-Lang conjecture.
Many results are obtained in various situations (see \cite{BGT2} for details).
\begin{conj}[{the Dynamical Mordell-Lang conjecture \cite[Conjecture 1.7]{GT}}]\label{Conjecture: DML}
Let $X$ be a quasi-projective variety over $\C$.
Let $f\colon X\longrightarrow X$ be an endomorphism of $X$ over $\C$.
For any $\C$-rational point $P\in X(\C)$ and any closed subvariety $Y\subset X$,
the set
\[
S_f(P,Y):= \{ n\in \Z_{\geq 0} \ |\ f^n(P)\in Y(\C)\}
\]
is a union of finitely many sets of the form
\[
\{ a_i+b_im\ |\ m\in\Z_{\geq 0}\}
\]
for some non-negative integers $a_i,b_i \in \Z_{\geq 0}$.
\end{conj}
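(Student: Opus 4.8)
Conjecture~\ref{Conjecture: DML} is open in the stated generality; the approach I would pursue is the $p$-adic interpolation method of Bell, Ghioca, and Tucker, which settles it whenever $f$ is \'etale (in particular for all automorphisms) and is the template for the expected general proof. First I would reduce to a convenient geometric setting. Replacing $X$ by the Zariski closure of the forward orbit $\{f^n(P)\ |\ n\geq 0\}$ and $Y$ by its intersection with this closure, we may assume the orbit is Zariski dense; in particular $X$ is irreducible and $f$ is dominant. Choosing an affine open subset of $X$ containing every $f^n(P)$ and spreading out, we may descend the datum $(X,f,P,Y)$ to a scheme of finite type over a finitely generated $\Z$-subalgebra $R$ of $\C$.

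The heart of the argument is the \'etale case, which runs as the Skolem--Mahler--Lech method adapted to dynamics. Pick a maximal ideal $\mathfrak{p}\subset R$ of good reduction, with residue field a finite field $\mathbb{F}_q$ and $\mathfrak{p}$-adic completion a finite extension $K_{\mathfrak p}$ of $\Q_p$. Since the reduction $\bar f$ permutes the finitely many $\mathbb{F}_q$-points lying on a fixed bounded model, some iterate $\bar f^{\,k}$ fixes the reduction $\bar P$ of $P$. On the residue disk around $P$ the iterate $F:=f^k$ is then a $\mathfrak p$-adic analytic self-map with $F(P)=P$, and \'etaleness forces the derivative $dF_P$ to be invertible; after replacing $k$ by a further multiple (so that the linear part of $F$ at $P$ is congruent to the identity modulo $\mathfrak p$) and shrinking the disk, a $\log$--$\exp$ argument produces a $\mathfrak p$-adic analytic map $\Phi$ from a polydisk around $0$ to $X(K_{\mathfrak p})$ with $\Phi(0)=P$ that interpolates the orbit, in the sense that $\ell\mapsto f^{k\ell}(P)=\Phi(\ell\mathbf{a})$ for a fixed tuple $\mathbf{a}$ extends to a $\mathfrak p$-adic analytic function of $\ell\in\Z_p$. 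For each residue class $r$ modulo $k$ and each element $g$ of a generating set of the ideal of $Y$, the function $\ell\mapsto g\bigl(f^{\,r}(\Phi(\ell\mathbf{a}))\bigr)$ is $\mathfrak p$-adic analytic on $\Z_p$; by Strassmann's theorem it is either identically zero or has finitely many zeros. Taking the finite union over $r$ and over the generators $g$ yields precisely the asserted description of $S_f(P,Y)$, the identically-zero case contributing a full arithmetic progression $r+k\Z_{\geq 0}$ and the other case a finite set of exponents.

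The main obstacle, and the reason the conjecture remains open in the stated generality, is the passage to the \'etale case. For an arbitrary endomorphism of a quasi-projective variety there is no known procedure to render $f$ \'etale along the orbit: if $f$ ramifies there, the derivative of a local iterate at its fixed point can be non-invertible (even nilpotent) modulo $\mathfrak p$, that iterate need not be a $\mathfrak p$-adic analytic automorphism near the fixed point, and the interpolation of $n\mapsto f^n(P)$ by a single analytic function fails; one also cannot in general choose a prime of good reduction at which the local dynamics is tame. It is precisely to sidestep this difficulty that the present paper restricts to \'etale endomorphisms and supplies the quantitative height input of Theorem~\ref{Theorem: order of height growth}, after which the $p$-adic method --- or the height-based substitute developed here --- applies.
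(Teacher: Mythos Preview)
The paper contains no proof of Conjecture~\ref{Conjecture: DML}: it is recorded there as an open problem, and only the \'etale special case is quoted as a black box (Theorem~\ref{Theorem: DML for etale}, citing \cite{BGT1}). You correctly acknowledge this at the outset, and your outline of the $p$-adic interpolation argument is an accurate summary of the Bell--Ghioca--Tucker proof of that special case, including the identification of the ramified case as the essential obstruction. There is thus no proof in the paper to compare your proposal against.

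One correction to your final paragraph: the height estimate of Theorem~\ref{Theorem: order of height growth} is not a ``height-based substitute'' for the $p$-adic method in attacking Conjecture~\ref{Conjecture: DML}. In the paper, heights are used only for the \emph{two-orbit} problem $f^m(P)=g^n(Q)$, and only to show that $|m-n|$ is bounded (see the proof of Theorem~\ref{Theorem: DML type theorem}); once that bound is in hand, Lemma~\ref{Lemma: sufficient condition} still appeals to the \'etale Dynamical Mordell--Lang theorem of \cite{BGT1} applied to the product map $f\times g$ on $X\times X$ and the diagonal $\Delta$. The \'etaleness hypothesis in Theorem~\ref{Theorem: DML type theorem} is imposed precisely so that this black-box invocation is legitimate, not because the paper has an independent height-theoretic route to Conjecture~\ref{Conjecture: DML}.
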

In Section \ref{section: proof of DML type},
we shall use the following result proved by Bell, Ghioca, and Tucker,
which is a special case of the Dynamical Mordell-Lang conjecture.
\begin{thm}[{\cite[Theorem 1.3]{BGT1}}]\label{Theorem: DML for etale}
If $f\colon X\longrightarrow X$ is an \'etale endomorphism,
Conjecture \ref{Conjecture: DML} holds.
\end{thm}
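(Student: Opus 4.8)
The statement is \cite[Theorem 1.3]{BGT1}, so I only outline the strategy of its proof --- the $p$-adic interpolation method of Bell, extended by Bell--Ghioca--Tucker to \'etale maps. One first descends and spreads out: $X$, $Y$, $f$, and $P$ are all defined over a number field $K$, and for all but finitely many rational primes $p$ there is a smooth model $\mathcal{X}$ of $X$ over $\Z_p$ to which $f$ extends as an \emph{\'etale} endomorphism $\mathcal{F}$, with $Y$ extending to a closed subscheme $\mathcal{Y}$ and $P\in\mathcal{X}(\Z_p)$. Fix such a $p$, taken odd and split completely in $K$ (so the chosen place has completion $\Q_p$ and residue field $\mathbb{F}_p$), and put $d:=\dim X$. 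Since $\mathcal{F}\colon\mathcal{X}\to\mathcal{X}$ and $P\in\mathcal{X}(\Z_p)$, the whole forward orbit lies in $\mathcal{X}(\Z_p)$.

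Next one concentrates the orbit in a single residue disc. As $\mathcal{X}(\mathbb{F}_p)$ is finite, the sequence of reductions $\overline{f^n(P)}$ is eventually periodic, of period $N$ from some index $k_0$ on. For $0\le r<N$ set $P_r:=f^{k_0+r}(P)$; then $\overline{P_r}$ is a fixed point of $\bar f^N$, the iterate $g:=f^N$ carries the residue disc $U_r\cong p\Z_p^{d}$ of $\overline{P_r}$ into itself, and on $U_r$ one has $g(x)=A_rx+O(x^2)$ with $A_r:=Jg(\overline{P_r})$; because $g$ is \'etale, $A_r\in\mathrm{GL}_d(\Z_p)$. Replacing $g$ by $g^{M}$, where $M$ is a common multiple of the orders of the reductions $\overline{A_r}\in\mathrm{GL}_d(\mathbb{F}_p)$, we may assume $Jg(\overline{P_r})\equiv I\pmod p$ for every $r$. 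Rescaling the coordinate on $U_r$ by $p$ turns $g|_{U_r}$ into an analytic self-map $G_r$ of $\Z_p^{d}$: the rescaling makes every nonlinear monomial acquire an extra factor of $p$, and the linear part is now $\equiv I\pmod p$, so $G_r(u)\equiv u\pmod p$ for all $u\in\Z_p^{d}$, while $P_r$ corresponds to a point $u_r\in\Z_p^{d}$.

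For $p$ odd this is precisely the hypothesis of the standard $p$-adic interpolation lemma: there is a power series $\Phi_r(s,u)$, convergent for $(s,u)\in\Z_p\times\Z_p^{d}$, with $\Phi_r(n,u)=G_r^{n}(u)$ for all $n\in\Z_{\ge0}$ --- one proves this by expanding $G_r^{n}$ in the basis $\binom{n}{k}$ and checking that the coefficient functions are divisible by growing powers of $p$. Let $\gamma_r(s):=\Phi_r(s,u_r)$, and let $h_1,\dots,h_m$ be functions cutting out $\mathcal{Y}$ in a Zariski neighborhood of $\overline{P_r}$, regarded as analytic functions on $U_r$. Each composite $h_j\circ\gamma_r\colon\Z_p\to\Z_p$ is a one-variable $p$-adic analytic function, so by Strassmann's theorem it is either identically zero or has only finitely many zeros; hence $\{n\ge0:\ g^{n}(P_r)\in Y\}$ is either all of $\Z_{\ge0}$ (when $\gamma_r$ takes values in $\mathcal{Y}$) or finite. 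Unwinding $g=f^{NM}$ and $P_r=f^{k_0+r}(P)$, and splitting $\Z_{\ge0}$ into the residues modulo $NM$ together with the initial segment $\{n<k_0\}$, we conclude that $S_f(P,Y)$ is the union of the finite set $\{n<k_0:f^n(P)\in Y\}$ with, for each $0\le r<NM$, either a finite set or the arithmetic progression $k_0+r+NM\cdot\Z_{\ge0}$ --- a finite union of sets of the stated form, a finite set being a finite union of singletons.

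The real obstacle is the middle stage: choosing $p$ well and passing to an iterate so that the local dynamics near the eventual reduction of the orbit becomes analytically close to the identity --- here \'etaleness is indispensable, since it provides the invertible Jacobian and hence a ``linear part'' that can be trivialized modulo $p$ by iterating --- and then verifying that the interpolated orbit is genuinely given by a convergent $p$-adic power series, which is what lets Strassmann's finiteness theorem be applied. The final reassembly of the arithmetic progressions, and the slightly stronger congruence required when $p=2$, are routine by comparison.
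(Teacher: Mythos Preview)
The paper does not prove this theorem at all: it is quoted from \cite{BGT1} and used as a black box in the proof of Lemma~\ref{Lemma: sufficient condition}. There is therefore nothing in the paper to compare your argument against, and your decision to give only an outline with a citation is entirely appropriate.

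Your sketch of the Bell--Ghioca--Tucker $p$-adic interpolation argument is faithful in its essentials: spreading out, locating the orbit in a residue disc, using \'etaleness to obtain an invertible Jacobian that can be trivialized modulo $p$ after iteration, interpolating the orbit by a $p$-adic analytic map in the variable $n$, and applying Strassmann's theorem. One small inaccuracy: Conjecture~\ref{Conjecture: DML} is stated over $\C$, not over a number field, so the first step is not simply ``everything is defined over a number field $K$ and we choose a split prime''. Rather, one observes that $X$, $Y$, $f$, $P$ are defined over a finitely generated $\Z$-algebra $R\subset\C$, and then invokes the embedding lemma (going back to Lech and Cassels, and used in \cite{BGT1}) asserting that for infinitely many primes $p$ there is a ring embedding $R\hookrightarrow\Z_p$. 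After base change along such an embedding one is in the $p$-adic setting you describe, and the rest of your outline goes through.
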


Furthermore, in \cite[Question 5.11.0.4]{BGT2}, the following conjecture is stated.
\begin{conj}[{\cite[Question 5.11.0.4]{BGT2}}]\label{Conjecture: DML type by BGT}
Let $X$ be a projective variety over $\var{\Q}$.
Let $H$ be an ample $\R$-divisor on $X$ over $\var{\Q}$.
Let $f,g\colon X\longrightarrow X$ be \'etale endomorphisms of $X$
over $\var{\Q}$ such that $f^\ast H \equiv \delta_f H$
and $g^\ast H \equiv \delta_g H$ hold in $\NS(X)_\R$ for some $\delta_f,\delta_g\in \R_{> 1}$.
Then for any points $P,Q\in X(\var{\Q})$, the set
\[
S_{f,g}(P,Q):= \{ (m,n)\ |\ f^m(P)=g^n(Q)\}
\]
is a union of finitely many sets of the form
\[
\{ (a_i+b_i\ell, c_i+d_i\ell)\ |\ \ell\in\Z_{\geq 0}\}
\]
for some non-negative integers $a_i,b_i,c_i,d_i\in\Z_{\geq 0}$
\end{conj}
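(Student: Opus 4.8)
The plan is to treat Conjecture~\ref{Conjecture: DML type by BGT} by a case analysis on the pair $(\delta_f,\delta_g)$, reducing the ``positive-degree'' cases either to the canonical-height formalism together with Baker's theorem, or to the Dynamical Mordell--Lang theorem for a single \'etale endomorphism (Theorem~\ref{Theorem: DML for etale}) applied on the product $X\times X$. After enlarging the base field, assume $X,f,g,P,Q$ are defined over a number field $K\subset\var{\Q}$; recall that an \'etale endomorphism of a connected smooth projective variety is automatically finite, flat and surjective. Since $f^\ast H-\delta_f H$ is numerically trivial, its associated height is $O(h_H^{1/2})$, so $h_H\circ f=\delta_f h_H+O(h_H^{1/2})$; the Call--Silverman construction then produces a canonical height $\widehat{h}_{H,f}$, bounded below, with $\widehat{h}_{H,f}\circ f=\delta_f\widehat{h}_{H,f}$ and $\widehat{h}_{H,f}-h_H=O(h_H^{1/2})$, and likewise $\widehat{h}_{H,g}$ for $g$; in particular $\widehat{h}_{H,f}-\widehat{h}_{H,g}=O(h_H^{1/2})$. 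Note there is no polynomial‑in‑$n$ factor here, so the integer $t_f(P)$ of Theorem~\ref{Theorem: order of height growth} does not enter. For $(m,n)\in S_{f,g}(P,Q)$ the equality $h_H(f^m(P))=h_H(g^n(Q))$ gives the basic estimate
\[
\delta_f^{\,m}\,\widehat{h}_{H,f}(P)=\delta_g^{\,n}\,\widehat{h}_{H,g}(Q)+O\!\bigl(\delta_f^{\,m/2}+\delta_g^{\,n/2}\bigr),
\]
which drives everything below.

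First I would dispose of the degenerate case $\widehat{h}_{H,f}(P)=0$ (the case $\widehat{h}_{H,g}(Q)=0$ is symmetric): then the forward $f$-orbit of $P$ has bounded height, hence is finite by the Northcott property over $K$, so $P$ is $f$-preperiodic, and $g^n(Q)$ takes finitely many values on $S_{f,g}(P,Q)$, forcing $Q$ to be $g$-preperiodic as well. Here $S_{f,g}(P,Q)=\bigcup_{R}\{m\mid f^m(P)=R\}\times\{n\mid g^n(Q)=R\}$, the union over the finitely many $R$ in the $f$-orbit of $P$, and each factor is a finite union of arithmetic progressions (the $g$-factor by Theorem~\ref{Theorem: DML for etale} with $Y=\{R\}$). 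This gives the conclusion \emph{unless} some such $R$ is simultaneously $f$- and $g$-periodic, in which case one gets a genuinely two-dimensional progression; as already visible in Theorem~\ref{Theorem: DML type theorem}, one must therefore add the hypothesis $\alpha_f(P)>1$, i.e.\ $\widehat{h}_{H,f}(P)>0$ (and likewise for $Q$), which I assume from now on.

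Next, the multiplicatively dependent case $\delta_f^{\,p}=\delta_g^{\,q}=:\delta$. Writing $m=pm'+r$, $n=qn'+s$ with $0\le r<p$, $0\le s<q$, and putting $\phi:=f^p$, $\psi:=g^q$ (so $\phi^\ast H\equiv\delta H\equiv\psi^\ast H$, and $\widehat{h}_{H,\phi}=\widehat{h}_{H,f}$, $\widehat{h}_{H,\psi}=\widehat{h}_{H,g}$ by uniqueness of canonical heights), one has
\[
S_{f,g}(P,Q)=\bigcup_{r=0}^{p-1}\bigcup_{s=0}^{q-1}\bigl\{(pm'+r,\,qn'+s)\ \big|\ (m',n')\in S_{\phi,\psi}(f^{r}(P),g^{s}(Q))\bigr\},
\]
and the affine substitutions $(m',n')\mapsto(pm'+r,qn'+s)$ carry translated sub-semigroups to translated sub-semigroups, so it suffices to treat $\delta_f=\delta_g=\delta$. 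In that case the basic estimate forces $|m-n|\le C$ on $S_{f,g}(P,Q)$ with $C$ absolute, and for each fixed $j$ with $|j|\le C$ the slice $\{(m,n)\in S_{f,g}(P,Q)\mid n-m=j\}$ is, after an evident index shift, the set of $k\ge0$ with $(f\times g)^{k}(R_j)\in\Delta_X$, where $R_j=(f^{a}(P),g^{b}(Q))$ for suitable $a,b\ge0$ and $\Delta_X\subset X\times X$ is the diagonal. Since $f\times g$ is an \'etale endomorphism of the smooth projective variety $X\times X$, Theorem~\ref{Theorem: DML for etale} applies and shows each slice is a finite union of arithmetic progressions; reassembling over $j$ settles this case.

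Finally, the multiplicatively independent case, where $\delta_f,\delta_g$ are multiplicatively independent real algebraic integers greater than $1$ and one expects $S_{f,g}(P,Q)$ to be \emph{finite}. The basic estimate now reads $m\log\delta_f-n\log\delta_g=\log(\widehat{h}_{H,g}(Q)/\widehat{h}_{H,f}(P))+O(\delta_f^{-m/2}+\delta_g^{-n/2})$, so for all but finitely many elements $n$ is determined by $m$; and if $(m_1,n_1),(m_2,n_2)\in S_{f,g}(P,Q)$ with $m_1<m_2$ (whence $n_1<n_2$ off a finite set), then $R:=f^{m_1}(P)=g^{n_1}(Q)$ satisfies $f^{\,a}(R)=g^{\,b}(R)$ with $a:=m_2-m_1\ge1$, $b:=n_2-n_1\ge1$; the basic estimate applied to the two points gives $|a\log\delta_f-b\log\delta_g|\le1$, and comparing $\widehat{h}_{H,f}$ with $\widehat{h}_{H,g}$ at $f^{a}(R)=g^{b}(R)$ (using $\widehat{h}_{H,f}(R)\asymp\widehat{h}_{H,g}(R)\asymp\delta_f^{\,m_1}$) sharpens this to $|a\log\delta_f-b\log\delta_g|=O(\delta_f^{-m_1/2})$. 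Baker's lower bound for linear forms in logarithms of algebraic numbers forces $|a\log\delta_f-b\log\delta_g|$ to exceed a fixed negative power of $\max\{a,b\}$, so $\max\{a,b\}$ is at least a fixed positive power of $\delta_f^{\,m_1/2}$: consecutive elements of $S_{f,g}(P,Q)$ have exponentially large gaps, so the set is extremely sparse. Upgrading this sparseness to genuine finiteness is the main obstacle, and precisely why the conjecture lies beyond the range of Theorem~\ref{Theorem: DML type theorem}. The natural tool is the $p$-adic interpolation of Bell--Ghioca--Tucker: for a good prime $p$ the orbit maps $m\mapsto f^m(P)$ and $n\mapsto g^n(Q)$ become $p$-adic analytic functions of $m,n\in\Z_p$ on each residue class modulo some $N$, and $S_{f,g}(P,Q)$ lands inside the zero set in $\Z_p\times\Z_p$ of a tuple of two-variable $p$-adic power series; but, unlike the one-variable situation underlying Theorem~\ref{Theorem: DML for etale}, such a zero set need not be finite, a priori being a $p$-adic analytic curve that could meet $\Z_{\ge0}^2$ in the graph of a non-affine integer-valued function. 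Excluding this would require marrying the $p$-adic analyticity to the Archimedean fact that the solutions hug the fixed line $n=(m\log\delta_f-c)/\log\delta_g$ --- whose slope is transcendental by Gelfond--Schneider, as $\delta_f,\delta_g$ are multiplicatively independent algebraic numbers --- for instance via two independent primes or an effective equidistribution input; this last step is where I expect the real difficulty to lie.
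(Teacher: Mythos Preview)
The statement is a \emph{conjecture}, not a theorem: the paper records it as an open question from \cite{BGT2} and does not prove it. What the paper does establish is Theorem~\ref{Theorem: DML type theorem}, which (together with the cited Theorem~\ref{Theorem: DML type theorem by BGT}) covers only the multiplicatively dependent case $\delta_f^{\,p}=\delta_g^{\,q}$ under the extra hypothesis that the orbits are infinite. There is thus no proof in the paper to compare your proposal against.

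Your write-up is, correspondingly, not a proof but a strategy, and you say so yourself. The multiplicatively dependent case you present is exactly the argument behind Theorem~\ref{Theorem: DML type theorem}: canonical heights bound $|m-n|$ after the reduction to $p=q=1$, and each diagonal slice is handled by Theorem~\ref{Theorem: DML for etale} applied to $f\times g$ on $X\times X$ with $Y=\Delta_X$ --- compare Lemmas~\ref{Lemma: reduction} and~\ref{Lemma: sufficient condition}. Your observation that the preperiodic case forces one to add the hypothesis $\alpha_f(P),\alpha_g(Q)>1$ (since a common $f$- and $g$-periodic point yields a full two-dimensional lattice in $S_{f,g}(P,Q)$, not a finite union of one-parameter progressions) is correct, and this is precisely why Theorem~\ref{Theorem: DML type theorem} and Conjecture~\ref{Conjecture: DML type conjecture} carry that hypothesis. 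For the multiplicatively independent case you invoke Baker's theorem to obtain sparseness but explicitly stop short of finiteness, writing that ``this last step is where I expect the real difficulty to lie.'' That is an honest assessment, but it means the conjecture remains just as open in your treatment as in the paper; the genuine gap you identify --- passing from exponentially growing gaps to actual finiteness via two-variable $p$-adic interpolation --- is real and is not bridged by anything in your sketch.
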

Bell, Ghioca, and Tucker proved a special case of Conjecture \ref{Conjecture: DML type by BGT}.
\begin{thm}[{\cite[Theorem 5.11.0.1]{BGT2}}]\label{Theorem: DML type theorem by BGT}
Conjecture \ref{Conjecture: DML type by BGT} holds if $\delta_f=\delta_g$.
\end{thm}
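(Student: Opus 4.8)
The plan is to combine Theorem \ref{Theorem: order of height growth} with the two hypotheses to confine $S_{f,g}(P,Q)$ to finitely many lines in $\Z^2$, and then to resolve the intersection of $S_{f,g}(P,Q)$ with each such line by reducing to the \'etale case of the Dynamical Mordell--Lang conjecture (Theorem \ref{Theorem: DML for etale}) on the product $X\times X$; this two-step structure is essentially what Lemma \ref{Lemma: sufficient condition} records. Write $\beta:=\alpha_f(P)^p=\alpha_g(Q)^q>1$ and $t:=t_f(P)=t_g(Q)$, and let $(m,n)\in S_{f,g}(P,Q)$. For $m$ and $n$ both at least the threshold $N_0$ of Theorem \ref{Theorem: order of height growth}, the equality $h_H(f^m(P))=h_H(g^n(Q))$ together with the two-sided bounds of that theorem gives, assuming $m\le n$ (the case $n\le m$ being symmetric under $(f,P,p)\leftrightarrow(g,Q,q)$),
\[
C_0' \, n^{t}\beta^{n/q} < h_H(g^n(Q)) = h_H(f^m(P)) < C_1\, m^{t}\beta^{m/p} \le C_1\, n^{t}\beta^{m/p}.
\]
Cancelling $n^{t}$ shows $pn-qm$ is bounded above, whence $n/m$ is bounded as $n\to\infty$; feeding this back into the reverse estimate $C_0\, m^{t}\beta^{m/p}<C_1'\, n^{t}\beta^{n/q}$ and taking logarithms, the polynomial discrepancy $t|\log m-\log n|$ is now $O(1)$, so $qm-pn$ is bounded above as well. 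Hence $|qm-pn|\le C$ for an absolute constant $C$ on all of $S_{f,g}(P,Q)$ apart from the pairs with $m<N_0$ or $n<N_0$; of these there are only finitely many, since then $f^m(P)$ (respectively $g^n(Q)$) ranges over a fixed finite set while $h_H(g^n(Q))\to\infty$ (respectively $h_H(f^m(P))\to\infty$) as $\alpha_g(Q),\alpha_f(P)>1$. Therefore $S_{f,g}(P,Q)$ is contained in a finite set together with the integer points of the finitely many lines $L_c:=\{(m,n)\mid qm-pn=c\}$, $|c|\le C$.

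Next I would treat $S_{f,g}(P,Q)\cap L_c$ for each fixed $c$ with $L_c\cap\Z^2\ne\emptyset$ (the others contributing nothing). The points of $L_c$ in $\Z_{\ge 0}\times\Z_{\ge 0}$ can be written as $(m,n)=(m_0+pk,\,n_0+qk)$ with $k\in\Z_{\ge 0}$ for suitable $m_0,n_0\in\Z_{\ge 0}$, and under this parametrisation $f^m(P)=g^n(Q)$ becomes $(f^p)^k(f^{m_0}(P))=(g^q)^k(g^{n_0}(Q))$. Put $F:=f^p$, $G:=g^q$ (still \'etale), $P':=f^{m_0}(P)$, $Q':=g^{n_0}(Q)$, and regard $F\times G$ as an \'etale endomorphism of the projective variety $X\times X$ and the diagonal $\Delta\subset X\times X$ as the target closed subvariety; then the set of admissible $k$ is exactly
\[
S_{F\times G}\bigl((P',Q'),\Delta\bigr)=\{k\in\Z_{\ge 0}\mid (F\times G)^k(P',Q')\in\Delta\}.
\]
By Theorem \ref{Theorem: DML for etale} this is a finite union of sets $\{a_i+b_i\ell\mid \ell\in\Z_{\ge 0}\}$, and substituting $m=m_0+p(a_i+b_i\ell)$, $n=n_0+q(a_i+b_i\ell)$ displays $S_{f,g}(P,Q)\cap L_c$ as a finite union of sets of the form required in the statement. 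Taking the union over the finitely many relevant $c$ and adjoining the finitely many exceptional points (each a singleton, i.e. of the stated form with $b_i=d_i=0$) finishes the proof.

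The main obstacle is the first step, namely showing that $S_{f,g}(P,Q)$ meets only finitely many of the lines $L_c$; this is exactly where both hypotheses are needed. The equality $\alpha_f(P)^p=\alpha_g(Q)^q$ makes the dominant exponential terms of $h_H(f^m(P))$ and $h_H(g^n(Q))$ comparable, so that $qm-pn$ is controlled at all, while the equality $t_f(P)=t_g(Q)$ is what allows the polynomial corrections $m^{t_f(P)}$ and $n^{t_g(Q)}$ to cancel; if the two exponents differed, the right-hand side of the key inequality would carry an uncancelled term of size $\asymp\log n$ and no confinement to finitely many lines would follow. Once that confinement is in hand, everything else is a formal reduction to the already-known \'etale Dynamical Mordell--Lang statement applied on $X\times X$.
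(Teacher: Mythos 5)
This theorem is a citation from the monograph of Bell, Ghioca, and Tucker; the paper does not prove it but instead observes, in the text following Remark \ref{Remark: polarized case}, that for \emph{smooth} $X$ it is the special case $p=q=1$ of the new Theorem \ref{Theorem: DML type theorem}, since the polarisation hypotheses force $\alpha_f(P)=\delta_f=\delta_g=\alpha_g(Q)$ and $t_f(P)=t_g(Q)=0$ once the finite-orbit cases are disposed of. Your argument essentially re-proves Theorem \ref{Theorem: DML type theorem} (even dragging along exponents $p,q$ that play no role here, since $\delta_f=\delta_g$ gives $p=q=1$), but it \emph{starts} from the hypotheses $\alpha_f(P)^p=\alpha_g(Q)^q>1$ and $t_f(P)=t_g(Q)$ as if they were given. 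They are not given in the present statement: you must first derive them from $f^\ast H\equiv\delta_f H$, $g^\ast H\equiv\delta_g H$, $\delta_f=\delta_g>1$, using Remark \ref{Remark: polarized case} (the estimate $\widehat{h}_{f,H}=h_H+O(\sqrt{h_H})$ together with the functional equation force $t_f(P)=0$ and $\alpha_f(P)=\delta_f$ whenever the orbit is infinite), and you must separately dispose of the case where the forward $f$-orbit of $P$ or of $Q$ is finite, which your proposal never addresses. Without that bridge the argument does not attach to the theorem under discussion.

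There is also a genuine scope gap: Conjecture \ref{Conjecture: DML type by BGT}, and hence this theorem, is stated for \emph{projective} $X$, while Theorem \ref{Theorem: order of height growth}, which your proof invokes for the two-sided growth bounds, is only proved for \emph{smooth} projective $X$; this is precisely why the paper hedges that Theorem \ref{Theorem: DML type theorem} generalises the present theorem only ``when $X$ is smooth.'' In the polarised setting you do not actually need the Jordan-block machinery: Remark \ref{Remark: polarized case} already yields $h_H(f^m(P))\asymp\delta_f^m$ on any projective $X$ with infinite orbit (since $\widehat{h}_{f,H}(f^m(P))=\delta_f^m\,\widehat{h}_{f,H}(P)$ with $\widehat{h}_{f,H}(P)>0$), and likewise for $g$, which is all your boundedness argument requires once $t=0$ and $p=q=1$. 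With that substitution the rest of your two-step skeleton --- confine $S_{f,g}(P,Q)$ to finitely many lines by comparing growth rates, then resolve each line by applying Theorem \ref{Theorem: DML for etale} to $f\times g$ on $X\times X$ with target $\Delta$ --- is sound, and it agrees in substance (with slightly different bookkeeping: you bound $|qm-pn|$ directly and parametrise the lines in one pass, whereas the paper first reduces to $p=q=1$ via Lemma \ref{Lemma: reduction} and then invokes Lemma \ref{Lemma: sufficient condition}) with the strategy the paper uses for Theorem \ref{Theorem: DML type theorem}.
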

\begin{rmk}[{see \cite[Theorem 2 (a), Proposition 7]{KS1} for details}]\label{Remark: polarized case}
If an ample $\R$-divisor $H$ satisfies $f^\ast H \equiv dH$
in $\NS(X)_\R$ for some $d\in\R_{>1}$,
the limit
\[
\widehat{h}_{f,H}(P):=\lim_{n\to\infty}\frac{h_H(f^n(P))}{d^n}
\]
converges for all $P\in X(\var{\Q})$ and satisfies
\begin{equation}\label{eqn: polarized}
\widehat{h}_{f,H}(f(P)) = d\widehat{h}_{f,H}(P)
\end{equation}
and
\begin{equation}\label{polarized order}
\widehat{h}_{f,H}- h_H = O(\sqrt{h_H}).
\end{equation}
The function $\widehat{h}_{f,H}$ is called the canonical height.
Furthermore, the following conditions are equivalent to each other:
\begin{itemize}
\item $\alpha_f(P) >1$,
\item $\alpha_f(P)=d$,
\item $\widehat{h}_{f,H}(P) \neq 0$, and
\item the forward $f$-orbit of $P$ is an infinite set.
\end{itemize}
\end{rmk}
In the setting of Theorem \ref{Theorem: DML type theorem by BGT},
when the forward $f$-orbit of $P$ or the forward $g$-orbit of $Q$ is finite,
the assertion of Theorem \ref{Theorem: DML type theorem by BGT} is obviously true.
By Remark \ref{Remark: polarized case},
we have
\[
\alpha_f(P)=\delta_f=\delta_g=\alpha_g(Q)>1\]
in the remaining case.
Thus, when $X$ is smooth,
Theorem \ref{Theorem: DML type theorem} is a generalization of
Theorem \ref{Theorem: DML type theorem by BGT}.
The assumption of Conjecture \ref{Conjecture: DML type by BGT} seems too strong.
We propose a more general conjecture as follows.

\begin{conj}\label{Conjecture: DML type conjecture}
Let $X$ be a smooth projective variety over $\var{\Q}$, and
let $f,g\colon X\longrightarrow X$ be \'etale endomorphisms of $X$ over $\var{\Q}$.
For points $P,Q\in X(\var{\Q})$ with $\alpha_f(P)>1$ and $\alpha_g(Q)>1$,
the following statements hold.
\begin{parts}
\Part{(a)} The set $S_{f,g}(P,Q)$
is a finite union of the sets of the form
\[
\{(a_i+b_i\ell,c_i+d_i\ell)\ |\ \ell\in\Z_{\geq 0}\}
\]
for some non-negative integers $a_i,b_i,c_i, d_i \in \Z_{\geq 0}$.
\Part{(b)}
If $\log_{\alpha_f(P)}\alpha_g(Q)$ is irrational or $t_f(P)\neq t_g(Q)$,
the set $S_{f,g}(P,Q)$ is finite.
\end{parts}
\end{conj}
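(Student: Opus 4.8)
Write $\alpha=\alpha_f(P)$, $\beta=\alpha_g(Q)$, and let $\Delta\subset X\times X$ be the diagonal; note that $f^{p}\times g^{q}$ is \'etale for all $p,q\geq 1$. The starting point, in every case, is the height estimate: by Theorem \ref{Theorem: order of height growth} there is a constant $C$, independent of $(m,n)$, with
\[
\bigl|\,t_f(P)\log m+m\log\alpha-t_g(Q)\log n-n\log\beta\,\bigr|\le C
\]
for all $(m,n)\in S_{f,g}(P,Q)$ with $m,n$ large; in particular $m\asymp n$ and $m/n\to\log_\alpha\beta$ along $S_{f,g}(P,Q)$. In the ``aligned'' case --- $\log_\alpha\beta\in\Q$ and $t_f(P)=t_g(Q)$, which is the hypothesis of Theorem \ref{Theorem: DML type theorem} --- write $\log_\alpha\beta=p/q$ in lowest terms (so $\alpha^{p}=\beta^{q}$); then the left-hand side above equals $(qm-pn)(\log\alpha)/q+t_f(P)\log(m/n)$ with $t_f(P)\log(m/n)=O(1)$, so $|qm-pn|$ is bounded and $S_{f,g}(P,Q)$ lies on the finitely many lines $qm-pn=e$. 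Parametrising each such line by $m=m_e+p\ell$, $n=n_e+q\ell$, one has $(f^{p}\times g^{q})^{\ell}(f^{m_e}(P),g^{n_e}(Q))\in\Delta$ exactly when $(m_e+p\ell,n_e+q\ell)\in S_{f,g}(P,Q)$, so Theorem \ref{Theorem: DML for etale} applied to $f^{p}\times g^{q}$ and $\Delta$ shows the contribution of each line is a finite union of arithmetic progressions; summing over $e$ proves (a) here.

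For the remaining cases I would first record a height dichotomy. If $\{(a+b\ell,c+d\ell)\mid \ell\in\Z_{\geq 0}\}\subseteq S_{f,g}(P,Q)$ with $(b,d)\ne(0,0)$, apply $h_H$ to $f^{a+b\ell}(P)=g^{c+d\ell}(Q)$ and Theorem \ref{Theorem: order of height growth} to both sides (legitimate since $\alpha,\beta>1$): if $b\geq 1$ then the left side is $\asymp\ell^{t_f(P)}\alpha^{b\ell}\to\infty$, so $d\geq 1$ as well (otherwise the right side is bounded), and then $\ell^{t_f(P)}\alpha^{b\ell}\asymp\ell^{t_g(Q)}\beta^{d\ell}$ forces $\alpha^{b}=\beta^{d}$ and hence $t_f(P)=t_g(Q)$. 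Thus any infinite progression contained in $S_{f,g}(P,Q)$ forces $\log_\alpha\beta=b/d\in\Q$ and $t_f(P)=t_g(Q)$; so under the hypothesis of (b) there is no such progression, and then (a) gives that $S_{f,g}(P,Q)$ is finite. Equivalently: outside the aligned case, statements (a) and (b) are equivalent, and both reduce to the single assertion that $S_{f,g}(P,Q)$ is \emph{finite} whenever $\log_\alpha\beta$ is irrational or $t_f(P)\ne t_g(Q)$.

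To prove that finiteness, the difficulty is that the height estimate is now too weak to bound $m$ and $n$: it only gives $|m\log\alpha-n\log\beta|=O(\log m)$, confining $S_{f,g}(P,Q)$ to a thin neighbourhood of the curve $m\log\alpha=n\log\beta$ which may a priori still contain infinitely many integer points (when $\log_\alpha\beta\in\Q$ the argument of the first paragraph now meets infinitely many lines $qm-pn=e$, each carrying only finitely many points of $S_{f,g}(P,Q)$ by Theorem \ref{Theorem: DML for etale} plus the height bound, but with no uniform control; when $\log_\alpha\beta\notin\Q$ there is no line decomposition at all). To close the gap I would pass to a $\mathfrak p$-adic analytic model in the spirit of \cite{BGT1}: for a suitable prime $\mathfrak p$ and after the standard reductions, the maps $\ell\mapsto f^{\ell}(P)$ and $\ell\mapsto g^{\ell}(Q)$ extend to $\mathfrak p$-adic analytic functions $F$ and $G$ on $\Z_{\mathfrak p}$, so that up to a translation $S_{f,g}(P,Q)$ is contained in the set of $(x,y)\in\Z_{\geq 0}^{2}$ with $F(x)=G(y)$. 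Since $\alpha>1$, the point $P$ is $f$-wandering, so $F$ is non-constant and $\{(x,y)\in\Z_{\mathfrak p}^{2}\mid F(x)=G(y)\}$ is a proper $\mathfrak p$-adic analytic subvariety; by Weierstrass preparation it is, locally, a finite union of graphs $y=\psi(x)$ with $\psi$ $\mathfrak p$-adic analytic, and it remains to show that each such graph meets $\Z_{\geq 0}^{2}$, within the above neighbourhood, in a set that is finite or an infinite arithmetic progression --- the latter being excluded by the dichotomy.

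The main obstacle is exactly this last step: controlling $\{x\in\Z_{\geq 0}\mid \psi(x)\in\Z_{\geq 0}\}$ for a transcendental $\mathfrak p$-adic analytic function $\psi$, i.e.\ a two-variable Skolem--Mahler--Lech problem. In contrast to the one-variable case --- where $\mathfrak p$-adic Weierstrass preparation yields finitely many zeros together with a periodic part, and which underlies Theorem \ref{Theorem: DML for etale} --- this does not follow formally from the \'etale Dynamical Mordell--Lang theorem, which is why the assertion is stated as a conjecture rather than a theorem. A plausible route is to extract extra rigidity from Theorem \ref{Theorem: order of height growth}, or, in favourable situations, from a refined ``double canonical height'' yielding an honest asymptotic expansion of $h_H(f^{n}(P))$ rather than only the two-sided bounds, so as to force $\psi$ to be affine with rational slope on the relevant integers; carrying this out in general remains open.
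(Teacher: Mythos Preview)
The statement you were given is a \emph{conjecture}: the paper does not prove it and does not claim to. What the paper does prove is the ``aligned'' special case --- $\alpha_f(P)^p=\alpha_g(Q)^q>1$ and $t_f(P)=t_g(Q)$ --- as Theorem~\ref{Theorem: DML type theorem}, together with one further piece of evidence for part~(b), namely Theorem~\ref{Theorem: DML type finiteness}, which needs the extra hypotheses that $f$ and $g$ commute and that $f$ is polarized. So there is no ``paper's own proof'' of the full statement to compare against.

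That said, your proposal is an accurate map of the terrain. Your treatment of the aligned case is correct and is essentially the paper's proof of Theorem~\ref{Theorem: DML type theorem}: the paper first replaces $(f,g)$ by $(f^p,g^q)$ to reduce to $p=q=1$ (Lemma~\ref{Lemma: reduction}), then uses the height estimate to bound $|m-n|$ and applies the \'etale DML theorem to $f\times g$ and the diagonal (Lemma~\ref{Lemma: sufficient condition}); you instead bound $|qm-pn|$ directly and apply the \'etale DML theorem to $f^p\times g^q$ on each of the finitely many lines. These are the same argument up to bookkeeping. Your dichotomy --- that any infinite arithmetic progression inside $S_{f,g}(P,Q)$ forces $\log_\alpha\beta\in\Q$ and $t_f(P)=t_g(Q)$ --- is also correct and is a clean way to see that, outside the aligned case, (a) and (b) collapse to the single finiteness assertion.

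Where you correctly stop is exactly where the paper stops: the finiteness of $S_{f,g}(P,Q)$ when $\log_\alpha\beta\notin\Q$ or $t_f(P)\neq t_g(Q)$ is genuinely open, and your $\mathfrak p$-adic sketch identifies the real obstruction (a two-variable Skolem--Mahler--Lech problem that does not reduce to the one-variable input behind Theorem~\ref{Theorem: DML for etale}). The one thing you might add, to match the paper's evidence, is that under commutativity of $f$ and $g$ and a polarization $f^\ast H\equiv dH$, a ``double canonical height'' $\widehat{\underline{h}}_{f,g,H}$ can be built which is positive and transforms by $\alpha_f(P)$ under $f$ and by $\alpha_g(Q)$ under $g$; this immediately gives $\alpha_f(P)^m=\alpha_g(Q)^n$ whenever $f^m(R)=g^n(R)$, hence finiteness in the irrational case. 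That is precisely the content of Theorem~\ref{Theorem: DML type finiteness}, and it realizes in a special case the ``refined double canonical height'' you allude to in your final paragraph.
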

The part $(b)$ asserts that the hypotheses from Theorem \ref{Theorem: DML type theorem} regarding $\delta_f(P)$,  $\delta_g(Q)$, $t_f(P)$, $t_g(Q)$ must met if the set $S_{f,g}(P,Q)$ were infinite.
In Section \ref{section: examples}, we give some examples
of endomorphisms for which Conjecture \ref{Conjecture: DML type conjecture} (b) hold.

\section{Proof of Theorem \ref{Theorem: DML type theorem}}\label{section: proof of DML type}
In this section we prove Theorem \ref{Theorem: DML type theorem}.

\begin{lem}\label{Lemma: reduction}
To prove Theorem \ref{Theorem: DML type theorem}, we may assume $p=q=1$.
\end{lem}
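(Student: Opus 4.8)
The plan is to pass from the pair $(f,g)$ to the pair of iterates $(f^p,g^q)$ and to organise the set $S_{f,g}(P,Q)$ according to the residue of $m$ modulo $p$ and of $n$ modulo $q$. The two inputs this needs are: that the iterates are still \'etale (hence surjective self-maps of $X$, so that Theorem \ref{Theorem: order of height growth} applies to them), and that the invariants $\alpha_f(P)$ and $t_f(P)$ transform predictably under passing to an iterate and shifting the base point along its forward orbit.

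First I would record the transfer of hypotheses. Since $\alpha_f(P)^p=\alpha_g(Q)^q>1$ and arithmetic degrees are always $\geq 1$, we get $\alpha_f(P)>1$ and $\alpha_g(Q)>1$, so Theorem \ref{Theorem: order of height growth} applies to $(f,P)$ and to $(g,Q)$. For $0\le r<p$ it gives
\[
h_H\big((f^p)^n(f^r(P))\big)=h_H\big(f^{pn+r}(P)\big)\asymp (pn+r)^{t_f(P)}\alpha_f(P)^{pn+r}\asymp n^{t_f(P)}\big(\alpha_f(P)^p\big)^n ,
\]
and since $\alpha_f(P)^p>1$ we may also apply Theorem \ref{Theorem: order of height growth} to $(f^p,f^r(P))$; comparing the two asymptotics and using that the base and the exponent occurring in that theorem are uniquely determined by the growth of $h_H$, we obtain $\alpha_{f^p}(f^r(P))=\alpha_f(P)^p$ and $t_{f^p}(f^r(P))=t_f(P)$. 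Symmetrically $\alpha_{g^q}(g^s(Q))=\alpha_g(Q)^q$ and $t_{g^q}(g^s(Q))=t_g(Q)$ for $0\le s<q$. Combined with the two hypotheses of Theorem \ref{Theorem: DML type theorem}, this shows that for every $(r,s)\in\{0,\dots,p-1\}\times\{0,\dots,q-1\}$ the quadruple $(f^p,g^q,f^r(P),g^s(Q))$ satisfies the hypotheses of Theorem \ref{Theorem: DML type theorem} with both exponents equal to $1$.

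Next I would decompose the set. Writing each $m\ge 0$ uniquely as $m=pm'+r$ with $0\le r<p$ and each $n\ge 0$ as $n=qn'+s$ with $0\le s<q$, and using $f^m=(f^p)^{m'}\circ f^r$ and $g^n=(g^q)^{n'}\circ g^s$, one gets
\[
S_{f,g}(P,Q)=\bigsqcup_{r=0}^{p-1}\ \bigsqcup_{s=0}^{q-1}\ \big\{\,(pm'+r,\,qn'+s)\ \big|\ (m',n')\in S_{f^p,g^q}(f^r(P),g^s(Q))\,\big\}.
\]
Granting Theorem \ref{Theorem: DML type theorem} in the case $p=q=1$, each $S_{f^p,g^q}(f^r(P),g^s(Q))$ is a finite union of sets $\{(a'+b'\ell,c'+d'\ell)\mid \ell\in\Z_{\ge 0}\}$, and the image of such a set under $(m',n')\mapsto(pm'+r,\,qn'+s)$ is $\{(pa'+r+pb'\ell,\ qc'+s+qd'\ell)\mid \ell\in\Z_{\ge 0}\}$, again of the required shape with non-negative integer parameters. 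Taking the finite union over all $(r,s)$ and over the finitely many progressions in each piece then gives Theorem \ref{Theorem: DML type theorem} in full.

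I do not expect a genuine obstacle here: the whole content is the bookkeeping above, and the only step that requires any care is the stability of the pair $\big(\alpha_f(P),t_f(P)\big)$ under replacing $f$ by $f^p$ and moving to $f^r(P)$, which is handled by invoking Theorem \ref{Theorem: order of height growth} together with the uniqueness of the exponent $t$ it produces.
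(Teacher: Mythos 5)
Your proof is correct and follows essentially the same route as the paper: decompose $S_{f,g}(P,Q)$ by residues of $m$ mod $p$ and $n$ mod $q$, and use Theorem \ref{Theorem: order of height growth} (applied twice, comparing asymptotics) to show that $\alpha$ and $t$ transform as needed under passing to iterates. One small point in your favour: the paper verifies $t_{f^p}(P)=t_f(P)$ and $\alpha_{f^p}(P)=\alpha_f(P)^p$ only at the base points $P$ and $Q$, whereas the decomposition actually requires the hypotheses of the theorem to hold for each shifted pair $(f^r(P),g^s(Q))$; you address this explicitly by checking the invariants at $f^r(P)$ and $g^s(Q)$, which closes a detail the paper leaves implicit (though it is immediate, since $\alpha_f$ and $t_f$ are unchanged along a forward orbit by definition).
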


\begin{proof}
We see that
\begin{align}
S_{f,g}(P,Q)= \bigcup_{\substack{0\leq i \leq p-1\\ 0\leq j \leq q-1}}
\{ (i+pm,j+qn)\ |\ (m,n)\in S_{f^p,g^q}(f^i(P),g^j(Q))\}.
\end{align}
Thus to prove Theorem \ref{Theorem: DML type theorem},
it is enough to prove it for $f^p$ and $g^q$.
By using Theorem \ref{Theorem: order of height growth} twice, we have
\begin{align}
n^{t_{f^p}(P)}\alpha_{f^p}(P)^n &\asymp h_H(f^{pn}(P))\\
&\asymp (pn)^{t_f(P)}\alpha_f(P)^{pn}\\
&\asymp n^{t_f(P)}\alpha_f(P)^{pn}.
\end{align}
Similarly, we obtain
\[n^{t_{g^q}}(Q) \alpha_{g^q}(Q)^n\asymp n^{t_g(Q)} \alpha_g(Q)^{qn}.\]
Hence combining with the assumption of
Theorem \ref{Theorem: DML type theorem} for $f$ and $g$,
we get
\begin{align}
t_{f^p}(P) &= t_f(P) = t_g(Q)= t_{g^q}(Q),\\
\alpha_{f^p}(P) &= \alpha_f(P)^p=\alpha_g(Q)^q= \alpha_{g^q}(Q).
\end{align}
Hence our assertion follows.
\end{proof}

\begin{lem}\label{Lemma: sufficient condition}
To prove Theorem \ref{Theorem: DML type theorem} in the case $p=q=1$,
it is enough to prove
\[
\sup_{(m,n)\in S_{f,g}(P,Q)}|m-n|<\infty .
\]
\end{lem}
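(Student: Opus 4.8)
The plan is to deduce the conclusion of Theorem \ref{Theorem: DML type theorem} (in the case $p=q=1$) from the boundedness hypothesis by reducing it to the Dynamical Mordell--Lang conjecture for a single étale endomorphism, namely $f\times g$ on $X\times X$ with target subvariety the diagonal $\Delta$, which is known by Theorem \ref{Theorem: DML for etale}.

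So, suppose $K:=\sup_{(m,n)\in S_{f,g}(P,Q)}|m-n|<\infty$. First I would slice $S_{f,g}(P,Q)$ by the value of $m-n$: writing $S_k:=\{(m,n)\in S_{f,g}(P,Q)\mid m-n=k\}$ we have $S_{f,g}(P,Q)=\bigcup_{|k|\le K}S_k$, a finite union, so since a finite union of sets of the prescribed shape is again of that shape it suffices to show each $S_k$ has that shape. Next I would record the routine facts that $X\times X$ is smooth projective, that $\Delta\subset X\times X$ is a closed subvariety, and that $f\times g$ is an étale endomorphism (it is the composite of the base changes $f\times\id$ and $\id\times g$ of the étale maps $f$ and $g$), so that Theorem \ref{Theorem: DML for etale} applies to it.

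Now fix $k$ and set $r:=\max\{k,0\}$, $s:=\max\{-k,0\}$, so $k=r-s$ with $r,s\in\Z_{\geq 0}$ and $rs=0$. Any $(m,n)\in\Z_{\geq 0}^2$ with $m-n=k$ can be written $(m,n)=(j+r,\,j+s)$ for $j:=\min\{m,n\}\in\Z_{\geq 0}$, and the equation $f^m(P)=g^n(Q)$ then becomes
\[
f^j\bigl(f^r(P)\bigr)=g^j\bigl(g^s(Q)\bigr),\qquad\text{equivalently}\qquad (f\times g)^j\bigl(f^r(P),\,g^s(Q)\bigr)\in\Delta(\var{\Q}).
\]
Therefore $\{\,j\in\Z_{\geq 0}\mid (j+r,\,j+s)\in S_k\,\}=S_{f\times g}\bigl((f^r(P),g^s(Q)),\,\Delta\bigr)$, which by Theorem \ref{Theorem: DML for etale} is a finite union of progressions $\{a_i+b_i\ell\mid\ell\in\Z_{\geq 0}\}$ with $a_i,b_i\in\Z_{\geq 0}$. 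Substituting back $m=j+r$ and $n=j+s$ exhibits $S_k$ as the finite union of the sets $\{(r+a_i+b_i\ell,\ s+a_i+b_i\ell)\mid\ell\in\Z_{\geq 0}\}$, which is of the required form; taking the union over $|k|\le K$ completes the argument.

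The only substantive ingredient is Theorem \ref{Theorem: DML for etale}; granting it, there is essentially no obstacle. The points requiring (minor) care are the index manipulation $(m,n)=(j+r,j+s)$ used to turn each slice into a genuine orbit-hitting-$\Delta$ problem for $f\times g$, the standard stability of smoothness, projectivity, and étaleness under products and composition, and the harmless passage from $\var{\Q}$ to $\C$ demanded by the statement of Theorem \ref{Theorem: DML for etale}, which changes none of the sets in play.
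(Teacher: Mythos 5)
Your proof is correct and follows essentially the same route as the paper's: both decompose $S_{f,g}(P,Q)$ into finitely many slices according to the value of $m-n$, identify each slice with the set of hitting times of the diagonal $\Delta\subset X\times X$ for the orbit of a suitable shifted base point under the \'etale endomorphism $f\times g$, and then invoke Theorem \ref{Theorem: DML for etale}. The only difference is cosmetic: you parametrize the slice $m-n=k$ uniformly via $(r,s)=(\max\{k,0\},\max\{-k,0\})$, whereas the paper splits into the two cases $k\geq 0$ and $k\geq 1$ separately.
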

\begin{proof}
We set
\[M:= \sup_{(m,n)\in S_{f,g}(P,Q)}|m-n|.\]
Then we have
\begin{align}
S_{f,g}(P,Q) &=
\bigcup_{0\leq k \leq M} \{ (m,m+k)\in S_{f,g}(P,Q)\ |\ m\in \Z_{\geq 0}\}\\
&\quad \cup
\bigcup_{1\leq k \leq M} \{(n+k,n)\in S_{f,g}(P,Q)\ |\ n\in \Z_{\geq 0}\}.
\end{align}
Let $f\times g\colon X\times X \longrightarrow X\times X$
be the product of the endomorphisms $f,g$.
Let $\Delta\subset X\times X$ be the diagonal.
Then we have
\begin{align}
(m,m+k)\in S_{f,g}(P,Q) &\Leftrightarrow f^m(P)=g^{m+k}(Q)\\
&\Leftrightarrow (f\times g)^m(P,g^k(Q))\in \Delta.
\end{align}
Since $f\times g$ is \'etale, the Dynamical Mordell-Lang conjecture is true for
$f\times g$ by Theorem \ref{Theorem: DML for etale}.
Hence the set
\[
\{ (m,m+k)\in S_{f,g}(P,Q)\ |\ m\in \Z_{\geq 0}\}
\]
is a finite union of the sets of the form
\[\{(a_i+b_i\ell, a_i+b_i\ell+k)\ |\ \ell\in\Z_{\geq 0} \}\]
for some non-negative integers $a_i,b_i \in \Z_{\geq 0}$.
Similarly, the set
\[
\{ (n+k,n)\in S_{f,g}(P,Q)\ |\ n\in \Z_{\geq 0}\}
\]
is a finite union of the sets of the form
\[
\{(c_i+d_i\ell+k,c_i+d_i\ell)\ |\ \ell\in\Z_{\geq 0}\}
\]
for some non-negative integers $c_i,d_i\in \Z_{\geq 0}$.
Thus the assertion follows.
\end{proof}

\begin{rmk}
Lemma \ref{Lemma: sufficient condition} is the only part where
the assumption of the \'etaleness of $f,g$ is used.
So Theorem \ref{Theorem: DML type theorem} is true
if the Dynamical Mordell-Lang conjecture (Conjecture \ref{Conjecture: DML}) is true for the endomorphism
\[
f^p\times g^q\colon X\times X\longrightarrow X\times X\]
and the diagonal $\Delta\subset X\times X$.
\end{rmk}

\begin{proof}[{Proof of Theorem \ref{Theorem: DML type theorem}}]
By Lemma \ref{Lemma: reduction}, we may assume $p=q=1$.
By Theorem \ref{Theorem: order of height growth},
there are positive real numbers $C_0,C_1,C_2,C_3>0$ such that the inequalities
\begin{align}
C_0 m^{t_f(P)}\alpha_f(P)^m &\leq h_H(f^m(P))\leq C_1m^{t_f(P)}\alpha_f(P)^m,\\
C_2 n^{t_g(Q)}\alpha_g(Q)^n &\leq h_H(g^n(Q))\leq C_3 n^{t_g(Q)}\alpha_g(Q)^n
\end{align}
hold except for finitely many $m$ and $n$.
Suppose $f^m(P)=g^n(Q)$ with $m\geq n$.
Then we get
\begin{equation}
(m/n)^{t_f(P)} \alpha_f(P)^{m-n}\leq C_3/C_0
\end{equation}
because $t_f(P)=t_g(Q)$ and $\alpha_f(P)=\alpha_g(Q)$ by assumption.
Hence we get
\begin{equation}\label{eqn: gap}
\alpha_f(P)^{m-n} \leq C_3/C_0.
\end{equation}
Since $\alpha_f(P)>1$, the inequality \eqref{eqn: gap} holds
only for finitely many values of $m-n$.
By the same argument for the case $m<n$, we conclude
\[\sup_{(m,n)\in S_{f,g}(P,Q)}|m-n|<\infty.\]
Hence by Lemma \ref{Lemma: sufficient condition},
the proof of Theorem \ref{Theorem: DML type theorem} is complete.
\end{proof}

\section{Some evidence for Conjecture \ref{Conjecture: DML type conjecture}}\label{section: examples}
In this section,  we prove the following theorem which gives some evidence
for Conjecture \ref{Conjecture: DML type conjecture} (b).
\begin{thm}\label{Theorem: DML type finiteness}
Let $X$ be a smooth projective variety over $\var{\Q}$.
Let $f,g\colon X\longrightarrow X$ be surjective endomorphisms
on $X$ over $\var{\Q}$.
Assume that $f$ commutes with $g$, and
there is an ample $\R$-divisor $H$ on $X$ over $\var{\Q}$
such that $f^\ast H \equiv dH$ in $\NS(X)_\R$ for some $d\in\R_{>1}$.
Let $P,Q\in X(\var{\Q})$ be points with $\alpha_f(P)>1$ and $\alpha_g(Q)>1$.
Assume that $\log_{\alpha_f(P)}\alpha_g(Q)$ is an irrational real number.
Then the set $S_{f,g}(P,Q)$
is finite.
\end{thm}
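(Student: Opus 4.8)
The plan is to reduce everything to the growth of the $f$-canonical height along the two orbits, and then to extract an arithmetic rigidity forced by the irrationality hypothesis.

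\textbf{Step 1 (set-up and reduction).} Since $f^\ast H\equiv dH$ with $d>1$, Remark~\ref{Remark: polarized case} gives $\alpha_f(P)=d$, the canonical height $\widehat h_{f,H}$ with $\widehat h_{f,H}\circ f=d\,\widehat h_{f,H}$ and $\widehat h_{f,H}\geq 0$, and $c:=\widehat h_{f,H}(P)>0$ because $\alpha_f(P)>1$. More generally, for any divisor $D$ with $f^\ast D\equiv dD$ the limit $\widehat h_{f,D}(R):=\lim_n h_D(f^n(R))/d^n$ exists (the successive differences are $O(d^{-n/2})$, using \eqref{polarized order} and that numerically trivial divisors have height $O(\sqrt{h_H})$), depends only on the numerical class of $D$, and is $\R$-linear in it. Because $f\circ g=g\circ f$ we have $f^\ast\circ g^\ast=g^\ast\circ f^\ast$ on $\NS(X)_\R$, so every class $(g^\ast)^nH$ lies in $\NS(X)_\R^{f^\ast=d}$; and $f^k\circ g^n=g^n\circ f^k$ gives
\[
\widehat h_{f,H}(g^n(R))=\widehat h_{f,(g^\ast)^nH}(R)\qquad(R\in X(\var{\Q}),\ n\geq 0).
\]
Since $g^\ast$ acts on the finite-dimensional subspace $W:=\mathrm{span}_\R\{(g^\ast)^nH:n\geq 0\}\subseteq\NS(X)_\R^{f^\ast=d}$, the sequence $a_n:=\widehat h_{f,H}(g^n(Q))$ is a linear recurrence sequence whose characteristic roots are eigenvalues of $g^\ast|_W$ (algebraic numbers); by Theorem~\ref{Theorem: order of height growth} applied to $g$ together with \eqref{polarized order} one has $a_n\asymp n^{t_g(Q)}\alpha_g(Q)^n$, so $a_n>0$ for $n\gg 0$ and the spectral radius of $(a_n)$ equals $\beta:=\alpha_g(Q)$, a real algebraic number. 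Applying $\widehat h_{f,H}$ to $f^m(P)=g^n(Q)$ yields $c\,d^m=a_n$, hence
\[
S_{f,g}(P,Q)\subseteq T:=\{(m,n)\in\Z_{\geq 0}^2:\ c\,d^m=a_n\},
\]
and it suffices to prove $T$ is finite.

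\textbf{Step 2 (the double canonical height).} Here is the mechanism in the clean case, where $(a_n)$ has a simple real characteristic root of maximal modulus. An averaging argument based on $a_n\geq 0$ and $a_n\asymp n^{t_g(Q)}\beta^n$ shows this root is $\beta$ itself and that the eigenvector $\xi_0\in W$ with $g^\ast\xi_0=\beta\xi_0$ (which lies in $\NS(X)_\R^{f^\ast=d}$) has $\widehat h_{f,\xi_0}(Q)\neq 0$. Then $\widehat h:=\widehat h_{f,\xi_0}$ satisfies $\widehat h\circ f=d\,\widehat h$ and $\widehat h\circ g=\widehat h_{f,\beta\xi_0}=\beta\,\widehat h$: it is a height that is canonical \emph{simultaneously} for $f$ and $g$, the double canonical height of the commuting pair. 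If $T\neq\emptyset$, the relation $d^m\widehat h(P)=\widehat h(f^m(P))=\widehat h(g^n(Q))=\beta^n\widehat h(Q)$ forces $\widehat h(P)\neq 0$, and for every $(m,n)\in S_{f,g}(P,Q)$,
\[
m\log d-n\log\beta=\log\bigl(\widehat h(Q)/\widehat h(P)\bigr),
\]
a constant. Since $\log_d\beta=\log_{\alpha_f(P)}\alpha_g(Q)$ is irrational, the map $(m,n)\mapsto m\log d-n\log\beta$ is injective on $\Z^2$, so $\#S_{f,g}(P,Q)\leq 1$.

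\textbf{Step 3 (the obstacle).} The hard part is everything Step 2 glossed over. When $\beta$ is a root of $(a_n)$ appearing in a Jordan block one must replace $\xi_0$ by a suitable element of the generalized $\beta$-eigenspace (and check the analogous non-vanishing at $Q$, hence at $P$) so that $n\mapsto\widehat h_{f,\xi_0}(g^n(Q))$ is still exactly geometric of ratio $\beta$; this is a bookkeeping matter. The genuine difficulty is that $(a_n)$ may have several characteristic roots of modulus $\beta$ — real oscillation of $a_n/\beta^n$ — which can occur because $g^\ast$ preserves the (possibly non-polyhedral) nef cone and may have complex peripheral eigenvalues of irrational argument, perfectly compatible with $a_n\geq 0$ and $a_n\asymp\beta^n$. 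Then no single class plays the role of $\xi_0$ and the elementary argument of Step 2 is unavailable, so finiteness of $T=\{(m,n):c\,d^m=a_n\}$ must be obtained from a finiteness theorem for Diophantine equations in linear recurrence sequences: the two sides are linear recurrences with multiplicatively independent dominant roots $d$ and $\beta$ (which is exactly $\log_d\beta\notin\Q$), and the Subspace Theorem — in the spirit of the results of Corvaja--Zannier and related work — gives only finitely many solutions $(m,n)$; when the maximal-modulus root is simple one can instead use a linear-forms-in-logarithms estimate on $|c\,d^m-(\text{leading term of }a_n)|$. I would present Steps~1--2 as the conceptual core and invoke the Subspace-Theorem input to close the oscillatory case.
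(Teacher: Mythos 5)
Your Step 1 is a reasonable set-up, and Step 2 would indeed close the argument in the ``clean'' case, but you yourself identify in Step 3 that the peripheral-oscillation case is left open and you propose to fall back on the Subspace Theorem or linear forms in logarithms. That is a genuine gap: you neither verify that those results apply to the equation $c\,d^m=a_n$ in the generality needed (for instance, that the recurrence $(a_n)$ is non-degenerate, or that the relevant finiteness theorem for linear recurrences with multiplicatively independent dominant roots covers this situation), nor do you carry out the argument. As written, the proof is not complete.

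The paper handles the oscillatory case with a much lighter tool that you didn't find: instead of trying to isolate an actual $g^\ast$-eigenclass $\xi_0$, it defines a ``double canonical height'' by a $\liminf$,
\[
\widehat{\underline{h}}_{f,g,H}(R):=\liminf_{n\to\infty}\frac{\widehat h_{f,H}(g^n(R))}{n^{t_g(R)}\alpha_g(R)^n},
\]
and observes two things. First, by Theorem~\ref{Theorem: order of height growth} and \eqref{polarized order} this $\liminf$ is bounded below by a positive constant, so it is never zero on the orbit in question --- no convergence or simplicity of the dominant root is ever needed, because a $\liminf$ is always defined. Second, because $f$ and $g$ commute and $f^\ast H\equiv dH$ is $f$-polarized, $\widehat{\underline{h}}_{f,g,H}$ satisfies \emph{both} functional equations $\widehat{\underline{h}}_{f,g,H}\circ f=\alpha_f(R)\,\widehat{\underline{h}}_{f,g,H}$ and $\widehat{\underline{h}}_{f,g,H}\circ g=\alpha_g(R)\,\widehat{\underline{h}}_{f,g,H}$ (the $g$-equation is just shift-invariance of $\liminf$, and the $f$-equation uses $g^n\circ f=f\circ g^n$ and $\widehat h_{f,H}\circ f=d\,\widehat h_{f,H}$ together with $t_g(f(R))=t_g(R)$, $\alpha_g(f(R))=\alpha_g(R)$). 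Evaluating at a common point $R=f^{m_0}(P)=g^{n_0}(Q)$ then turns any relation $f^m(R)=g^n(R)$ directly into $\alpha_f(R)^m=\alpha_g(R)^n$, and irrationality of $\log_{\alpha_f(P)}\alpha_g(Q)$ finishes the proof, with no appeal to the Subspace Theorem or Baker's method. In short: the key idea you are missing is that a $\liminf$-normalized height is enough to get the multiplicative rigidity, so the hard Diophantine input you invoke in Step 3 is unnecessary.
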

\begin{proof}[{Proof of Theorem \ref{Theorem: DML type finiteness}}]
It is enough to prove that
once $f^{m_0}(P)=g^{n_0}(Q)$ is satisfied for some $m_0$ and $n_0$,
we have $f^{m+m_0}(P)\neq g^{n+n_0}(Q)$
for all $(m,n)\in \Z_{\geq 0}^2\setminus\{(0,0)\}$.
Therefore, it is enough to prove that for every point $R\in X(\var{\Q})$,
we have $f^m(R)\neq g^n(R)$ for all $(m,n) \in \Z_{\geq 0}^2\setminus \{(0,0)\}$.

Fix a Weil height function $h_H$ associated with $H$ so that $h_H\geq 1$.
By Theorem \ref{Theorem: order of height growth}, there are positive real numbers $C_0>0$ and $C_1>0$ satisfying
\begin{equation}\label{eqn: order}
C_0 n^{t_g(R)}\alpha_g(R)^n \leq h_H(g^n(R)) \leq C_1 n^{t_g(R)}\alpha_g(R)^n
\end{equation}
for all $n\in \Z_{\geq 1}$.
We set
\[
\widehat{\underline{h}}_{f,g,H}(R):= \liminf_{n\to \infty}\frac{\widehat{h}_{f,H}(g^n(R))}{n^{t_g(R)}\alpha_g(R)^n}.
\]
From \eqref{polarized order} and \eqref{eqn: order}, we have
\[
\widehat{\underline{h}}_{f,g,H}(R) = \liminf_{n\to \infty}\frac{h_H(g^n(R))}{n^{t_g(R)}\alpha_g(R)^n}\geq C_0 >0.
\]

Since the asymptotic behavior of $h_H(g^n(R))$ does not depend on
the choice of an ample divisor $H$ and a height function $h_H$,
one can see that
\begin{align}
h_H(g^n\circ f(R)) &= h_H(f\circ g^n(R))\\
&= h_{f^\ast H}(g^n(R))+O(1)\\
&\asymp n^{t_g(R)} \alpha_g(R)^n,
\end{align}
where the first equality follows from the commutativity of $f$ and $g$.
This asymptotic equality means that
we have $t_g(f(R))=t_g(R)$ and $\alpha_g(f(R))=\alpha_g(R)$.
Hence the functional equations
\begin{align}
\widehat{\underline{h}}_{f,g,H}(f(R)) &= \alpha_f(R)\widehat{\underline{h}}_{f,g,H}(R),\\
\widehat{\underline{h}}_{f,g,H}(g(R)) &= \alpha_g(R)\widehat{\underline{h}}_{f,g,H}(R)
\end{align}
hold (see Remark \ref{Remark: polarized case}).
Thus, if we have $f^m(R)=g^n(R)$,
we get
\begin{align}
\alpha_f(R)^m \widehat{\underline{h}}_{f,g,H}(R)
&= \widehat{\underline{h}}_{f,g,H}(f^m(R))\\
&= \widehat{\underline{h}}_{f,g,H}(g^n(R))\\
&= \alpha_g(R)^n \widehat{\underline{h}}_{f,g,H}(R).
\end{align}
Hence $\alpha_f(R)^m=\alpha_g(R)^n$.
This equality can hold only when $m=n=0$
since we are assuming $\log_{\alpha_f(R)}\alpha_g(R)$ is an irrational real number.
\end{proof}

\section*{Acknowledgments}
The author is grateful for the Top Global University project for Kyoto University
(abbrev.\ KTGU project).
With the support of the KTGU project, the author had a chance
to visit Professor Joseph H. Silverman during February-April, 2017
and October-November 2017.
The author would appreciate his hospitality when
the author was staying at Brown University.
These were great opportunities to discuss the research and
to consider the future works.
The author would like to thank Professor Tetsushi Ito
for carefully reading an early version of this paper and
pointing out some inaccuracies.
The author would also like to thank Takahiro Shibata who suggested
a variant of the Dynamical Mordell-Lang conjecture to the author.

\end{document}